\documentclass[12pt,twoside,reqno]{article}

\usepackage{amsfonts, amsthm, amsmath, amssymb}
\usepackage{hyperref}
\hypersetup{colorlinks=false}

\usepackage[margin=1.5in]{geometry}
\RequirePackage{mathrsfs} \let\mathcal\mathscr
\numberwithin{equation}{section}

\usepackage[numbers,sort]{natbib}
\setlength{\bibsep}{1.5pt}

\pagestyle{myheadings} \markboth{Liqun Hu and Siqi Liu}{A pair of Goldbach-Linnik equations in unlike powers of primes and powers of two}
\allowdisplaybreaks[4]

\newtheorem{thm}{Theorem}[section]
    
    \newtheorem{lem}[thm]{Lemma}

    \newcommand{\bea}{\begin{eqnarray}}
    \newcommand{\eea}{\end{eqnarray}}
    \newcommand{\bna}{\begin{eqnarray*}}
    \newcommand{\ena}{\end{eqnarray*}}

    \theoremstyle{definition}

    \numberwithin{equation}{section}

\begin{document}

\title{A pair of Goldbach-Linnik equations in unlike powers of primes and powers of two}
\author{Liqun Hu and Siqi Liu\\
Department of Mathematics, Nanchang University,\\
Nanchang, Jiangxi 330031, P.R. China\\
E-mail: huliqun@ncu.edu.cn and siqiliu463@gmail.com}
\date{\today}
\maketitle
{\bf Abstract}:
In this paper, we show that every pair of large even integers satisfying certain necessary conditions can be expressed as a pair of one prime, one prime square, two prime cubes and 56 powers of 2.\par
\textbf{Keywords and phrases}: Circle method, Linnik problem, powers of two
\par
\textbf{Mathematics Subject Classification}: 11P32, 11P05, 11P55
\section{Introduction}

In the 1950s, Linnik \cite{LLK1,LLK2} proved that every large even integer $N$ can be expressed as the sum of two primes and a bounded number of powers of 2, i.e.
\begin{equation}\label{1100}
N=p_{1}+p_{2}+2^{v_{1}}+2^{v_{2}}+\cdots +2^{v_{k}}.
\end{equation}
The famous Goldbach conjecture entails that $k=0$. The parameter $k$ has been more accurately determined by many authors.\par
In 1999, Liu, Liu, and Zhan \cite{LLZ} proved that every large even integer $N$ can be expressed as a sum of four squares of primes and a bounded number of powers of 2, i.e.
\begin{equation}\label{1200}
N=p_{1}^{2} +p_{2}^{2}+p_{3}^{2}+p_{4}^{2} +2^{v_{1}}+2^{v_{2}}+\cdots +2^{v_{k}}.
\end{equation}
The best result at present is Zhao \cite{Zhao}, who has achieved a value of $k=39$.\par
In 2001, Liu and Liu \cite{LL} proved that every large even integer $N$ can be expressed as the sum of eight cubes of prime numbers and a bounded number of powers of 2, i.e.
\begin{equation}\label{1300}
N=p_{1}^{3} +p_{2}^{3}+\cdots +p_{8}^{3} +2^{v_{1}}+2^{v_{2}}+\cdots +2^{v_{k}}.
\end{equation}
Researchers have continuously lowered the acceptable value for $k$.
\begin{align*}
k&=358   & \text{(Liu \& L\"{u})\cite{83333}},   \\
k&=341   & \text{(Liu)\cite{LZX2}},  \\
k&=330   & \text{(Platt \& Trudgian)\cite{DT}},\\
k&=204   & \text{(Zhao \& Ge)\cite{ZG}},\\
k&=169   & \text{(Zhao)\cite{ZXDD}},\\
k&=30    & \text{(Zhu)\cite{ZLZL}},\\
k&=28    & \text{(Han \& Liu)\cite{HLLH}}.
\end{align*}\par
In 2011, Liu and L{\"u} \cite{LLUU} considered the hybrid problem of \eqref{1100}, \eqref{1200} and \eqref{1300}, i.e.
\begin{equation}
N=p_{1} +p_{2}^{2}+p_{3}^{3}+p_{4}^{3} +2^{v_{1}}+2^{v_{2}}+\cdots +2^{v_{k}}.
\end{equation}
Then researchers made continuous improvements to the acceptable value of $k$.
\begin{align*}
    k&=161   & \text{( Liu \& L{\"u} \cite{LLUU})},   \\
  k&=156   & \text{( Platt \& Trudgian \cite{DT})},   \\
    k&=16   & \text{(Zhao \cite{Zhao2})},  \\
    k&=15   & \text{(L{\"u} \cite{Lv}  )}.
\end{align*}\par
In 2017, Hu and Yang \cite{HY} first considered the simultaneous representation of pairs of positive even integers satisfying $N_{2} \gg N_{1}>N_{2}$, i.e.
\begin{equation}\label{1500}
\begin{cases}
N_{1} =p_{1} +p_{2}^{2}+p_{3}^{3}+p_{4}^{3} +2^{v_{1}}+2^{v_{2}}+\cdots +2^{v_{k}},\\
N_{2} =p_{5} +p_{6}^{2}+p_{7}^{3}+p_{8}^{3} +2^{v_{1}}+2^{v_{2}}+\cdots +2^{v_{k}},
\end{cases}
\end{equation}
where $k$ is a positive integer. Then researchers made continuous improvements to the acceptable value of $k$.
\begin{align*}
  k&=455   & \text{( Hu \& Yang \cite{HY})},   \\
    k&=302   & \text{( Liu \cite{LUH})},  \\
    k&=187   & \text{( Hu \& Cai \cite{HC}  )}.
\end{align*}\par
The main objective of this paper is to refine this result by giving the following theorem.
\begin{thm}
    For $k=56$, the equations \eqref{1500} are solvable for every pair of large positive even integers $N_{1}$ and $N_{2}$ satisfying $N_{2} \gg N_{1}>N_{2}$.
\end{thm}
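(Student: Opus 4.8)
The plan is to establish the theorem by the two-dimensional Hardy--Littlewood circle method, handling the $k$ common powers of two by Linnik's device in the form developed by Liu--Liu--Zhan \cite{LLZ} and sharpened in the works quoted above. Write $L=\lfloor\log N_2/\log 2\rfloor$; with $X_1\asymp N_1$, $X_2\asymp N_1^{1/2}$, $X_3\asymp N_1^{1/3}$ chosen so that $p_1+p_2^2+p_3^3+p_4^3$ can represent $N_1$ (and $X_i'$ the analogues for $N_2$), put
\[
 f_i(\theta)=\sum_{X_i<p\le 2X_i}(\log p)e(p^{i}\theta),\qquad f_i'(\theta)=\sum_{X_i'<p\le 2X_i'}(\log p)e(p^{i}\theta)\qquad(i=1,2,3),
\]
and $h(\theta)=\sum_{1\le v\le L}e(2^v\theta)$. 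Since $v_1,\dots,v_k$ occur in both equations of \eqref{1500}, the weighted number of solutions is
\[
 \mathcal R(N_1,N_2)=\int_0^1\!\!\int_0^1 f_1(\alpha)f_2(\alpha)f_3(\alpha)^2\,f_1'(\beta)f_2'(\beta)f_3'(\beta)^2\,h(\alpha+\beta)^k\,e(-N_1\alpha-N_2\beta)\,d\alpha\,d\beta,
\]
and it suffices to prove $\mathcal R(N_1,N_2)>0$. I would dissect $[0,1)^2$ into major arcs $\mathfrak M$ (both coordinates within $N^{-1+\varepsilon}$ of a rational of denominator $\le P_0=(\log N_2)^{B}$) and minor arcs $\mathfrak m$, with $B$ a large constant fixed at the end.

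On $\mathfrak M$ the classical treatment applies — Siegel--Walfisz for $f_1,f_1'$, Weyl differencing and the Hua-type evaluation of the relevant Gauss sums for $f_2,f_3,f_2',f_3'$, and the near-origin behaviour $h(\alpha+\beta)\approx L$ — giving a main term
\[
 \int_{\mathfrak M}\cdots\;\gg\;L^k\,\mathfrak S(N_1,N_2)\,N_1^{7/6}N_2^{7/6}\,(\log N_1\log N_2)^{-c_0},
\]
where $\mathfrak S(N_1,N_2)$ is the product of the $p$-adic densities attached to the pair of congruences; under the stated hypotheses one checks $\mathfrak S(N_1,N_2)\gg 1$, the local factors at odd $p$ staying bounded below because powers of two are plentiful modulo $p$, while the condition $N_2\gg N_1>N_2$ controls the singular integral. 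The logarithmic loss $c_0$ can be made as small as one pleases by the iterative pruning of the major arcs used in \cite{Zhao2,Lv}.

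The decisive step, at which the value $k=56$ is forced, is the minor-arc estimate
\[
 \int_{\mathfrak m}\bigl|f_1f_2f_3^2\,f_1'f_2'(f_3')^2\bigr|\,|h(\alpha+\beta)|^k\,d\alpha\,d\beta=o\!\left(L^kN_1^{7/6}N_2^{7/6}(\log N_1\log N_2)^{-c_0}\right).
\]
Three inputs combine. (i) On $\mathfrak m$ at least one of $\alpha,\beta$ avoids the one-dimensional major arcs; there Vinogradov's method gives $f_1\ll N_1(\log N_1)^{-A}$ for any fixed $A$, and Weyl/Hua — better, Vinogradov's mean-value theorem — gives a genuine power saving for the cube-of-primes sum $f_3$, the true bottleneck, together with standard mean values such as $\int_0^1|f_1f_2f_3^2|\ll N_1^{7/6+\varepsilon}$ and $\int_0^1|f_1'f_2'(f_3')^2|^2\ll N_2^{10/3+\varepsilon}$. (ii) Even-moment bounds $\int_0^1|h(\theta)|^{2\ell}d\theta\ll_\ell L^{\ell}$ and their joint versions with the prime-power sums. (iii) The distribution estimate: for a suitable $\lambda_0<1$,
\[
 \operatorname{meas}\{\theta\in[0,1):|h(\theta)|>\lambda_0 L\}\ll N^{-c(\lambda_0)+\varepsilon},
\]
with an explicit $c(\lambda_0)>0$ coming from the $2$-adic structure of $\theta\mapsto(\{2^v\theta\})_v$; the quality of this estimate (in the sharpened forms of Heath-Brown--Puchta and Pintz--Ruzsa, as adapted to this system in \cite{LUH,HC}) governs the size of $k$. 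One then splits $\mathfrak m$ by whether $|h(\alpha+\beta)|\le\lambda_0 L$ or $|h(\alpha+\beta)|>\lambda_0 L$: on the first part $|h|^k\le(\lambda_0 L)^{k-2\ell}|h|^{2\ell}$ and (i)--(ii) bound the integral by $\lambda_0^{\,k-2\ell}$ times the main term; on the second part $|h|^k\le L^k$, and (iii) — with the fact that $\{|h|>\lambda_0 L\}$ forces $\alpha+\beta$ near a rational whose denominator yields a Vinogradov saving — makes this part negligible once $B$ is large. Choosing $\lambda_0$ as large as the mean-value deficits in (i) permit and then $k$ so that $\lambda_0^{\,k}$ defeats all implied constants (those in $\mathfrak S$ included) is the optimization producing $k=56$; with the major-arc lower bound this yields $\mathcal R(N_1,N_2)>0$.

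I expect the main obstacle to be exactly this minor-arc balancing: the cube-of-primes exponential sum and its mean values must be pushed far enough — the classical Weyl bound being only just insufficient, so refined mean-value input is indispensable — while simultaneously one needs a sharp form of the distribution estimate (iii) for $\sum_{v\le L}e(2^v\theta)$. It is the interaction of these two, carried through the two-dimensional minor arcs where $h(\alpha+\beta)$ couples $\alpha$ and $\beta$ and forces integrating out one variable at a fixed level of $|h|$ before using the prime-power estimates in the other, that pins down the constant $k=56$.
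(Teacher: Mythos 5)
Your general scaffold — two-dimensional circle method, Linnik's device for the shared powers of two, and a split of the minor arcs according to whether $|G(\alpha_1+\alpha_2)|\gtrless\lambda L$ with a Pintz--Ruzsa-style measure bound on the large set — is indeed what the paper does. But two things in your proposal are genuine gaps rather than details.

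First, your generating functions take \emph{both} cube variables on the full range $p\asymp N^{1/3}$, so your main term is of size $N_1^{7/6}N_2^{7/6}$. The paper instead restricts one cube variable to a shorter range, $p\sim V_i$ with $V_i=U_i^{5/6}\asymp N_i^{5/18}$ (see $S(\alpha_i)$ and $T(\alpha_i)$ above and \eqref{2.1}), giving a main term of order $N_1^{10/9}N_2^{10/9}$. This truncation is not cosmetic: the mean-value estimates that power the minor-arc bound, namely $\int_0^1|g^2T^4|\ll N_i^{10/9}$ and $\int_{C(\mathcal{M}_i)}|g^2T^5|\ll N_i^{65/48}$ (Lemmas~\ref{lemma2.6}--\ref{lemma2.7}), rely on $T$ having a shorter range than $S$, and the Hölder exponents that ultimately produce $G^{15/2}$ — hence the factor $\lambda^{k-15/2}$ in Lemma~\ref{lemma 2.15} — are tuned to this $U,V$ asymmetry. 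With two full-range cube sums your minor-arc integral would not close against the $N^{7/6}$ main term without a larger $k$.

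Second, you treat the arrival at $k=56$ as an unspecified optimization. In the paper this number comes from three explicit computations, none of which appears in your sketch: an explicit lower bound for the product of singular series over the admissible residues modulo $2\cdot 273$ (Lemmas~\ref{lemma2.3}--\ref{lemma2.4}, giving the constant $0.055033$ in the major-arc lower bound); explicit numerical upper bounds for the relevant mean values $\iint|f^2f^2G^4|$, $\iint|g^4g^4G^{22}|$, and $\int|S^4T^4|$ (Lemmas~\ref{lemma 2.10}, \ref{lemma 2.11}, \ref{lemma 2.13}, the last using a sieve estimate for $\sum R(m_i)$); and the choice $\lambda=0.8512$ so that $E(\lambda)>25/36$, making $R_2$ lower order. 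The final inequality $0.055033>132.42956\,\lambda^{k-15/2}$ is precisely what fails at $k=55$ and holds at $k=56$. Without this explicit numerology — which is the real content of the theorem — the argument establishes only that \emph{some} finite $k$ works, a fact already known. Also note that your major arcs of denominator $\le(\log N)^B$ would leave a large intermediate range to the minor arcs, where $f_3$ enjoys at best Weyl-type savings; the paper instead takes $P_i=N_i^{1/9-2\epsilon}$ so the entire intermediate range sits on the major arcs and is handled by the explicit Lemma~\ref{lemma2.1} (ultimately Liu--L\"u's evaluation), which is again necessary for the constants to come out.
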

In this paper, we make a new estimate of singular series with the help of computer. In addition, we have also made a new estimate of minor arcs by adopting some strategies used in the works of Hu and Cai \cite{HC}, Kong and Liu \cite{KYF} and Hathi \cite{HA}.\par
\textbf{Notation:} Throughout this paper, the $N_{i}$ is employed to represent a large even number satisfying $N_{2} \gg N_{1}>N_{2}$. The symbol $\epsilon $ represents a positive constant, which can be arbitrarily small but may vary across different instances. We use $e(\alpha )$ to denote $e^{2 \pi i \alpha  } $, and $n\sim N$ means $N<n\leq 2N$.

\section{Auxiliary estimation}
In this section we introduce the necessary lemmas and auxiliary estimates to prove Theorem 1.1.\par
In order to apply the circle method, we set
\begin{equation}
\nonumber
P_{i} =N_{i} ^{\frac{1}{9} -2\epsilon }, Q_{i} =N_{i} ^{\frac{8}{9} +\epsilon }, L=\log_{2}{N_{1} }.
\end{equation}
We define the major arcs $\mathcal{M}_i$ and minor arcs $C(\mathcal{M}_i)$ as usual,
\begin{equation}
\nonumber
\mathcal{M}_i=\bigcup_{q\leq{P_i}}
\bigcup_{\substack{1\leq a\leq q \\ (a,q)=1}}{\mathcal{M}_i}(a,q),\ C(\mathcal{M}_i)=\Bigg[\frac{1}{Q_i},1+\frac{1}{Q_i}\Bigg]\setminus{\mathcal{M}_i},
\end{equation}
where $i=1,2$ and
$$
{\mathcal{M}_i}(a,q)=\left\{{\alpha_i}\in[0,1]:{\bigg|{\alpha_i}-\frac{a}{q}\bigg|} \leq {\frac{1}{{q}{Q_i}}}\right\}.
$$
Actually, it can be readily inferred that any two distinct major arcs $\mathcal{M}_1(a,q)$ and $\mathcal{M}_2(a,q)$ are mutually disjoint due to $2P_i\leq Q_i$.
We further define
\begin{equation}
\nonumber
\mathcal{M}=\mathcal{M}_1\times\mathcal{M}_2=\left\{(\alpha_1,\alpha_2):{\alpha_1\in\mathcal{M}_1},{\alpha_2\in\mathcal{M}_2}\right\},
\end{equation}
\begin{equation}
\nonumber
C(\mathcal{M})={{\Bigg[\frac{1}{Q_i},1+\frac{1}{Q_i}\Bigg]}^2}\setminus{\mathcal{M}}.
\end{equation}
Let $\delta =10^{-4} $ and
\begin{equation}\label{2.1}
U_{i} =\left ( \frac{N_{i}}{ 16(1+\delta )}  \right ) ^{\frac{1}{3} },\ \
V_{i} =U_{i}^{\frac{5}{6} } ,
\end{equation}
for $i=1,2$.
We set
\begin{equation}
\nonumber
f(\alpha_i)=\sum_{p\leq {N_i}}^{}(\log_{}p)e({p}\alpha_i),\
g(\alpha_i)=\sum_{p\leq {N_i^{\frac{1}{2} }}}(\log_{}p)e({p^{2} }\alpha_i),
\end{equation}
\begin{equation}
\nonumber
S(\alpha_i)=\sum_{p\sim U_{i} }(\log_{}p)e({p^{3} }\alpha_i),\
T(\alpha_i)=\sum_{p\sim V_{i} }(\log_{}p)e({p^{3} }\alpha_i),
\end{equation}
and set
\begin{equation}
\nonumber
G(\alpha_i)=\sum_{v\leq{L}^{}}e({2^v}\alpha_i),\
{\mathcal{E}_\lambda}=\left\{\alpha_i\in{[0,1]}:|G(\alpha_i)|\geq{\lambda L}\right\},
\end{equation}
for $i=1,2$.\par
Let
\begin{equation}
\nonumber
R(N_1,N_2)=\sum_{\substack{N_{1} =p_{1} +p_{2}^{2}+p_{3}^{3}+p_{4}^{3} +2^{v_{1}}+2^{v_{2}}+\cdots +2^{v_{k}} \\
N_{2} =p_{5} +p_{6}^{2}+p_{7}^{3}+p_{8}^{3} +2^{v_{1}}+2^{v_{2}}+\cdots +2^{v_{k}}\\
p_{1}\leq N_{1},\ p_{2}\leq N_{1}^{\frac{1}{2}},\ p_{3} \sim U_{ 1},\ p_{4} \sim V_{1},\ p_{5}\leq N_{2},\\
p_{6}\leq N_{2}^{\frac{1}{2}},\ p_{7} \sim U_{2},\ p_{8} \sim V_{2},\ v_{j}\leq L(j=1,2,..,k) }}{\log_{}{p_1}}{\log_{}{p_2}}\cdots{\log_{}{p_{8}}}.
\end{equation}
Then $R(N_1,N_2)$ can be represented as
\begin{eqnarray}
&&{\bigg({\iint \limits_{ \mathcal{M}}}}+{\iint \limits_{ C(\mathcal{M})\cap {\mathcal{E}_\lambda}}}+{\iint \limits_{C(\mathcal{M})\setminus{ {\mathcal{E}_\lambda}}}}\bigg) \
\prod_{i=1}^2f(\alpha_i)g(\alpha_i)S^{2}(\alpha_i)T^{2}{(\alpha_i)}{G^k}(\alpha_1+\alpha_2) \notag\\
&& \quad \times{e({-\alpha_1}{N_1})}e({-\alpha_2}{N_2)}d{\alpha_1}\,d{\alpha_2} \notag\\
&&:={R_1}(N_1,N_2)+{R_2}(N_1,N_2)+{R_3}(N_1,N_2).\notag
\end{eqnarray}
The establishment of Theorem 1.1 will be achieved through the estimation of the term $R_{1} (N_{1},N_{2} )$, $R_{2} (N_{1},N_{2} )$ and $R_{3} (N_{1},N_{2} )$. Our goal is to prove that $R(N_1,N_2)>0$ for every pair of large positive even integers $N_1$ and $N_2$ satisfying $N_2\gg N_1> N_2$. Now we state the lemmas required for this paper and make the necessary auxiliary estimates.\par
Let
\begin{equation}
\nonumber
{C_i}(q,a)=\sum_{\substack{m=1\\ (m, q)=1}}^{q}{e\left ( \frac{a{m^i}}{q} \right ) },
\end{equation}
for $i=1,2,3$, and
\begin{equation}
\nonumber
A(n,q)={\frac{\mu (q)}{{\varphi}^4(q)}}{\sum_{\substack{a=1\\ (a, q)=1}}^{q}{C_2}(q,a){C_{3}^{2}}(q,a){e\left ( \frac{-an}{q}  \right ) }}, \
\mathfrak{S}(n)=\sum_{q=1}^{\infty}A(n,q).
\end{equation}
\begin{lem}\label{lemma2.1}
For ${{N_i}/2}\leq {n_i}\leq N_i \; (i=1,2)$, we have
$$
\int \limits_{ \mathcal{M}_i}{f(\alpha_i)}{g(\alpha_i)}{S{(\alpha_i)}}{T{(\alpha_i)}}{e(-{n_i}\alpha_i)}d{\alpha_i}={\frac{1}{{2}\cdot{3^2}}}{\mathfrak{S}({n_i})}{{\mathfrak{J}}}({n_i})+O({N_i}^{\frac{10}{9} }L^{-1}).
$$
where
\begin{equation}
\nonumber
\mathfrak{J}(n_i):=\sum_{\substack{{m_1}+{m_2}+{m_3}+{m_4}={n_i}\\{m_1}\leq {N_i}, \  {m_2}\leq {N_i}\\{U_{i}^{3} }\leq {m_3}\leq 8U_{i}^{3},\ {V_{i}^{3} }\leq {m_4}\leq 8V_{i}^{3}}} {m_2}^{-\frac{1}{2} }{({m_3}{m_4}})^{-\frac{2}{3} }.
\end{equation}
\end{lem}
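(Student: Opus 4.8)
The plan is the classical major-arc analysis of the circle method: on each arc $\mathcal{M}_i(a,q)$ I replace $f,g,S,T$ by their expected main terms, multiply out, identify the arithmetic factor with $A(n_i,q)$ and the analytic factor with the truncated convolution defining $\mathfrak{J}(n_i)$, and then estimate the errors.

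Fix $\mathcal{M}_i(a,q)$ with $q\le P_i$, $(a,q)=1$, and write $\alpha_i=a/q+\beta$ with $|\beta|\le 1/(qQ_i)$. Introduce the abelian approximations
\[
u_1(\beta)=\sum_{m\le N_i}e(m\beta),\qquad u_2(\beta)=\frac12\sum_{m\le N_i}\frac{e(m\beta)}{\sqrt m},
\]
\[
u_3(\beta)=\frac13\sum_{U_i^3<m\le 8U_i^3}\frac{e(m\beta)}{m^{2/3}},\qquad u_4(\beta)=\frac13\sum_{V_i^3<m\le 8V_i^3}\frac{e(m\beta)}{m^{2/3}}
\]
to $f,g,S,T$ respectively. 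The standard major-arc estimates for exponential sums over primes, in a form valid uniformly for $q\le P_i$, give
\[
f(\alpha_i)=\frac{\mu(q)}{\varphi(q)}u_1(\beta)+\Delta_f,\qquad g(\alpha_i)=\frac{C_2(q,a)}{\varphi(q)}u_2(\beta)+\Delta_g,
\]
\[
S(\alpha_i)=\frac{C_3(q,a)}{\varphi(q)}u_3(\beta)+\Delta_S,\qquad T(\alpha_i)=\frac{C_3(q,a)}{\varphi(q)}u_4(\beta)+\Delta_T,
\]
where, for any fixed $A$, each $\Delta$ gains a factor $(\log N_i)^{-A}$ over the trivial bound for the corresponding sum, uniformly on $\mathcal{M}_i(a,q)$.

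Next I multiply the four identities. The product of the four main terms has coefficient $\mu(q)C_2(q,a)C_3^2(q,a)\varphi(q)^{-4}$ times $u_1u_2u_3u_4(\beta)$, so summing over $a$ modulo $q$ with $(a,q)=1$ against $e(-n_ia/q)$ reproduces exactly $A(n_i,q)$. Since $u_1u_2u_3u_4$ is a finite exponential polynomial in $\beta$ whose frequencies are precisely the sums $m_1+m_2+m_3+m_4$ over the ranges occurring in $\mathfrak{J}$, one has the identity $\int_{-1/2}^{1/2}u_1u_2u_3u_4(\beta)e(-n_i\beta)\,d\beta=\tfrac{1}{2\cdot3^2}\mathfrak{J}(n_i)$. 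Completing the $\beta$-integral from $|\beta|\le1/(qQ_i)$ to $[-\tfrac12,\tfrac12]$ costs $\int_{1/(qQ_i)<|\beta|\le1/2}|u_1u_2u_3u_4|\,d\beta$, which I bound via $|u_1(\beta)|\ll\min(N_i,\|\beta\|^{-1})$, $|u_2(\beta)|\ll\min(N_i^{1/2},\|\beta\|^{-1/2})$, $|u_3(\beta)|\ll\min(U_i,(\|\beta\|U_i^{2})^{-1})$ and $|u_4(\beta)|\ll\min(V_i,(\|\beta\|V_i^{2})^{-1})$; completing the $q$-sum from $q\le P_i$ to all $q$ costs $\sum_{q>P_i}|A(n_i,q)|\cdot\tfrac{1}{2\cdot3^2}\mathfrak{J}(n_i)$, which I control by the absolute convergence of the singular series together with $\mathfrak{J}(n_i)\ll N_i^{10/9}$ (here $N_i/2\le n_i\le N_i$ is used). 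Both completions are $O(N_i^{10/9-\eta})$ for a fixed $\eta>0$, hence $O(N_i^{10/9}L^{-1})$, and the procedure produces the stated main term $\tfrac{1}{2\cdot3^2}\mathfrak{S}(n_i)\mathfrak{J}(n_i)$.

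Finally I must bound the cross terms --- the finitely many products in which at least one of $f,g,S,T$ is replaced by its $\Delta$ and the rest by their main-term expressions. For the term with $\Delta_f$, bound $|\Delta_f|$ by $\varphi(q)^{-1}N_i(\log N_i)^{-A}$ and the remaining factors by $\varphi(q)^{-1}|C_2(q,a)|\,|u_2|$, $\varphi(q)^{-1}|C_3(q,a)|\,|u_3|$, $\varphi(q)^{-1}|C_3(q,a)|\,|u_4|$; integrating over $\mathcal{M}_i$ and summing over $a,q$, the pointwise bounds on the $u_j$ above together with $|C_2(q,a)|,|C_3(q,a)|\ll q^{2/3}$ and the convergence of $\sum_q q^{-1-\eta_0}$ for a fixed $\eta_0>0$ give a bound of the form $N_i^{10/9}(\log N_i)^{O(1)-A}$, which is $O(N_i^{10/9}L^{-1})$ once $A$ is large. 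The cross terms involving $\Delta_g,\Delta_S,\Delta_T$, and the higher-order ones, are handled the same way. The main obstacle is exactly this bookkeeping: verifying that every one of these contributions, and both completions above, is genuinely $\ll N_i^{10/9}L^{-1}$ and not merely $\ll N_i^{10/9}(\log N_i)^{O(1)}$. It is here that the choices $U_i\asymp N_i^{1/3}$, $V_i=U_i^{5/6}$, $P_i=N_i^{1/9-2\epsilon}$ and $Q_i=N_i^{8/9+\epsilon}$ must all be balanced against the size $\mathfrak{J}(n_i)\asymp N_i^{10/9}$ of the main term, the arc width $1/(qQ_i)$, and the oscillation thresholds $U_i^{-3}$, $V_i^{-3}$ of the cubic generating functions $S$ and $T$.
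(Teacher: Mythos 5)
Your main-term bookkeeping is correct: multiplying the four local approximations produces the coefficient $\mu(q)C_2(q,a)C_3^2(q,a)\varphi(q)^{-4}$, summing over $a$ reconstructs $A(n_i,q)$, and the Fourier coefficient of $u_1u_2u_3u_4$ at frequency $n_i$ is exactly $\tfrac{1}{2\cdot3^2}\mathfrak{J}(n_i)$. The gap lies in the central assertion that the pointwise approximations
\[
f(\alpha_i)=\tfrac{\mu(q)}{\varphi(q)}u_1(\beta)+\Delta_f,\quad
g(\alpha_i)=\tfrac{C_2(q,a)}{\varphi(q)}u_2(\beta)+\Delta_g,\quad\text{etc.},
\]
hold with each $\Delta$ saving an arbitrary power of $\log N_i$ over the trivial bound \emph{uniformly for $q\le P_i=N_i^{1/9-2\epsilon}$}. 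That is not a ``standard major-arc estimate'': Siegel--Walfisz-type arguments yield such savings only for $q\le(\log N_i)^B$ with $B$ fixed, and for $q$ as large as a positive power of $N_i$ a pointwise $(\log N_i)^{-A}$ saving is tantamount to a zero-free region of GRH strength, which is unavailable unconditionally. (A minor additional slip: the error term $\Delta_f$ in such an approximation does not carry the $\varphi(q)^{-1}$ factor you attach to it when bounding the cross terms.) You acknowledge in your final paragraph that ``the main obstacle is exactly this bookkeeping''; the proposal does not surmount it, and this is precisely the part of the lemma that requires real work.

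The paper itself gives no proof here; it cites Liu and L\"u \cite[Lemma 2.1]{LLUU}, whose argument (in the tradition of Liu, Liu and Zhan \cite{LLZ} and the Waring--Goldbach literature, cf.\ \cite{2121}) handles the enlarged major arcs $q\le N^{\delta}$ by expanding $f,g,S,T$ in Dirichlet characters, isolating the principal-character contribution as the main term, and controlling the non-principal contributions not pointwise but in mean over $q$ and $a$, via the large sieve, zero-density estimates for $L(s,\chi)$, and/or an iterative pruning of the arcs. That machinery is exactly what your sketch replaces with an unsupported pointwise bound. To make your outline into a proof you would either have to restrict the pointwise analysis to $q\le(\log N_i)^B$ and separately prune the range $(\log N_i)^B<q\le P_i$ by mean-value methods, or reproduce the character-sum and zero-density analysis of the cited sources.
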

\begin{proof}
The proof can be found in Liu and L{\"u} \cite[Lemma 2.1]{LLUU}. The detailed discussion can be found in many papers (see \cite{2121} etc.).
\end{proof}
\begin{lem}\label{lemma2.2}
For $(1-\delta )N_i\leq n_i \leq N_i$, we have ${\mathfrak{J}}(n_i)>C_{2} N_i^{\frac{10}{9} }$, with
$$
C_{2}=2.338190371.
$$
\end{lem}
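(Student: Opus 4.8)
The plan is to evaluate $\mathfrak{J}(n_i)$ by replacing the arithmetic sum over $m_1,m_2,m_3,m_4$ with a four-dimensional integral, bound the error incurred, and then reduce the integral to a one-dimensional definite integral that can be estimated numerically. First I would note that the summand $m_2^{-1/2}(m_3 m_4)^{-2/3}$ is smooth and monotone in each variable on the relevant ranges, so by partial summation (or by comparing the sum to the integral cell-by-cell) one has
$$
\mathfrak{J}(n_i) = \int\limits_{\mathcal D(n_i)} t_2^{-\frac{1}{2}} (t_3 t_4)^{-\frac{2}{3}}\, dt_2\, dt_3\, dt_4 + O\!\left(N_i^{\frac{10}{9}-\eta}\right)
$$
for some $\eta>0$, where $\mathcal D(n_i)$ is the region $t_1+t_2+t_3+t_4=n_i$, $0\le t_1\le N_i$, $0\le t_2\le N_i$, $U_i^3\le t_3\le 8U_i^3$, $V_i^3\le t_4\le 8V_i^3$, and $t_1$ has been eliminated using the linear constraint. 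The error term is genuinely smaller than the main term because each of the four variables ranges over an interval of length comparable to a positive power of $N_i$.

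Next I would carry out the change of variables $t_j = N_i u_j$ (and use $U_i^3 = N_i/(16(1+\delta))$, $V_i^3 = U_i^{5/2}$, so that $U_i^3/N_i$ and $V_i^3/N_i$ are explicit constants depending only on $\delta$, up to negligible error since $V_i^3 = o(N_i)$). This rescaling pulls out the factor $N_i^{3} \cdot N_i^{-1/2} \cdot N_i^{-4/3} = N_i^{10/9}$ — here $3 - 1/2 - 4/3 = 10/9$ — leaving
$$
\mathfrak{J}(n_i) = N_i^{\frac{10}{9}} \int\limits_{\mathcal D'} u_2^{-\frac{1}{2}}(u_3 u_4)^{-\frac{2}{3}}\, du_2\, du_3\, du_4 + O\!\left(N_i^{\frac{10}{9}-\eta}\right),
$$
where $\mathcal D'$ is the rescaled region. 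Since $n_i \in [(1-\delta)N_i, N_i]$ and the $u_3,u_4$ ranges are essentially $[1/(16(1+\delta)),\,1/(2(1+\delta))]$ and a vanishingly short interval near $0$ respectively, the dominant contribution comes from integrating out $u_4$ trivially (its integrand $u_4^{-2/3}$ is integrable at $0$), then $u_3$ over its fixed interval, then $u_2$ over the interval cut out by $0 \le u_1 = n_i/N_i - u_2 - u_3 - u_4 \le 1$ and $u_2 \ge 0$. After performing the elementary integrations one is left with a single explicit integral in $u_3$ (or equivalently a closed-form expression), which is minimized over the admissible range $n_i/N_i \in [1-\delta,1]$; evaluating this numerically — this is where the computer assistance mentioned in the introduction enters — yields the constant $C_2 = 2.338190371$, chosen slightly below the true infimum to absorb all $O$-terms.

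The main obstacle will be bookkeeping: one must show the discretization error is $O(N_i^{10/9-\eta})$ uniformly for $n_i$ in the stated range (this requires care near the boundary of $\mathcal D(n_i)$, where the constraint $t_1 \le N_i$ becomes active and the integrand $t_2^{-1/2}$ blows up as $t_2 \to 0$, but the singularity is integrable so a dyadic decomposition of the $t_2$-range handles it), and one must verify that the numerical minimization of the resulting explicit one-variable function over $[1-\delta,1]$ is done rigorously — e.g. by interval arithmetic or by an explicit monotonicity/derivative argument — so that the claimed digits of $C_2$ are certified rather than merely observed. The actual analytic manipulations (eliminating $t_1$, rescaling, integrating $t_4^{-2/3}$ and $t_3^{-2/3}$) are routine; I would relegate the numerics to a short remark or appendix and simply record that the infimum exceeds $2.338190371$.
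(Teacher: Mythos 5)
The paper does not actually prove this lemma; it cites Liu and L\"u \cite[Lemma 4.2]{LLUU}, so there is no in-paper argument to compare against. Your general strategy — replace the sum defining $\mathfrak{J}(n_i)$ by an integral, eliminate one variable with the linear constraint, rescale, and evaluate the resulting explicit integral numerically — is indeed the standard route and is almost certainly what the cited source does.

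However, your exponent accounting contains a genuine error that propagates through the rest of the proposal. You claim the rescaling $t_j = N_i u_j$ pulls out $N_i^{3}\cdot N_i^{-1/2}\cdot N_i^{-4/3}=N_i^{10/9}$ ``since $3-\tfrac12-\tfrac43=\tfrac{10}{9}$'', but in fact $3-\tfrac12-\tfrac43=\tfrac{7}{6}$. The discrepancy is not merely numerical: it comes from treating $V_i^3/N_i$ as a constant, which you even acknowledge it is not (you write ``$V_i^3=o(N_i)$'' in the same breath). Since $V_i^3=U_i^{5/2}\asymp N_i^{5/6}$, rescaling $m_4$ by $N_i$ produces a $u_4$-range of length $\asymp N_i^{-1/6}$ collapsing to the origin, and the ``trivial'' integral $\int u_4^{-2/3}\,du_4$ over that range is $\asymp (V_i^3/N_i)^{1/3}\asymp N_i^{-1/18}$, not $O(1)$. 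Multiplying $N_i^{7/6}\cdot N_i^{-1/18}=N_i^{10/9}$ does recover the right power, but your write-up asserts the rescaled integral is a constant bounded below, which is false. The clean fix is to rescale each variable by the length of its own range — $m_2 = N_i u_2$, $m_3 = U_i^3 u_3$, $m_4 = V_i^3 u_4$ with $u_3,u_4\in[1,8]$ — giving a main term $N_i^{1/2}\,U_i\,V_i \cdot \int u_2^{-1/2}u_3^{-2/3}u_4^{-2/3}\,du_2\,du_3\,du_4$, where now the integral is a bounded constant and $N_i^{1/2}U_iV_i\asymp N_i^{1/2+1/3+5/18}=N_i^{10/9}$. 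This also matters for getting the explicit constant: the factors $(16(1+\delta))^{-1/3}$ and $(16(1+\delta))^{-5/18}$ coming from $U_i$ and $V_i$ must be tracked precisely to arrive at $C_2=2.338190371$, and your version, which buries them inside a vanishing integral, makes that bookkeeping impossible to carry out correctly.

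Beyond this, your remarks about discretization error, the integrable singularity at $u_2\to 0$, and the need for certified numerics are all reasonable and in the right spirit, but the central scaling step must be repaired before the proof can be completed.
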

\begin{proof}
The proof of this lemma can be found in Liu and L{\"u} \cite[Lemma 4.2]{LLUU}.
\end{proof}
\begin{lem}\label{lemma2.3}
We have
$$
{(1+A(n_i,5))(1+A(n_i,11))}{\prod_{p\geq17}^{\infty}(1+A(n_i,p))}\geq C :=0.902985.
$$
\end{lem}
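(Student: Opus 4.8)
The plan is to bound the local factor at each prime from below and assemble a convergent product. Recall that $A(n_i,p)=\mu(p)\varphi^{-4}(p)\sum_{(a,p)=1}C_2(p,a)C_3^2(p,a)e(-an_i/p)$, so for $p\ge 17$ the standard Weil/Hua estimates give $|C_2(p,a)|\le\sqrt{p}$ and $|C_3(p,a)|\le 2\sqrt{p}$ (or the exact value for $p\equiv1\ (3)$ versus $p\equiv2\ (3)$), whence $|A(n_i,p)|\le (p-1)\cdot 4 p^{1/2}\cdot p \cdot (p-1)^{-4} = 4p^{3/2}(p-1)^{-3}$. I would first record this tail bound and note that $\sum_{p\ge 17}\log(1-4p^{3/2}(p-1)^{-3})$ converges, so $\prod_{p\ge 17}(1+A(n_i,p))\ge \prod_{p\ge17}(1-4p^{3/2}(p-1)^{-3})$ uniformly in $n_i$. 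One then splits the product into a finite initial segment (primes up to some explicit cutoff $P_0$, say a few hundred or a few thousand) that is evaluated numerically by the computer, times an analytic tail from $P_0$ onward that is bounded below by a crude but rigorous estimate (e.g. $\prod_{p>P_0}(1-5p^{-3/2})\ge 1-5\sum_{p>P_0}p^{-3/2}$, with the sum controlled by an integral comparison or by $\sum_{n>P_0}n^{-3/2}$).

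The slightly more delicate point is the factors at $p=5$ and $p=11$, which are pulled out separately because there $4p^{3/2}(p-1)^{-3}$ can be close to or exceed $1$, so the crude bound $1+A(n_i,p)\ge 1-|A(n_i,p)|$ might be useless or even negative. For these two primes I would instead compute $A(n_i,p)$ exactly as a function of the residue class of $n_i$ modulo $p$: since $C_2(p,a)$, $C_3(p,a)$ depend only on $a\bmod p$, the inner sum over $a$ is a finite trigonometric sum, and $1+A(n_i,p)$ takes only $p$ possible values as $n_i$ ranges over residues mod $p$. Taking the minimum of these (at most $5$, resp. $11$) values gives the true worst-case contribution, and the claim is that $(1+\min_n A(n,5))(1+\min_n A(n,11))$ times the tail still exceeds $C=0.902985$. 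This is the step I expect to be the main obstacle, not because it is deep but because it is where the constant is genuinely tight: one must verify that the product never collapses to zero (equivalently that the relevant local solubility never fails for $n_i$ in the required range), and the decision to exclude $p=5,11$ from the general bound is precisely dictated by this.

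Concretely the steps are: (i) derive the uniform tail bound $|A(n_i,p)|\le 4p^{3/2}(p-1)^{-3}$ for $p\ge 17$ from the cubic and quadratic Gauss/Weil sum estimates; (ii) for a chosen cutoff $P_0$, bound $\prod_{p>P_0}(1+A(n_i,p))\ge 1-c\sum_{p>P_0}p^{-3/2}$ by an explicit convergent series, getting a rigorous lower bound like $0.9997\cdots$; (iii) for the finitely many primes $17\le p\le P_0$, have the computer evaluate (or lower-bound over residue classes) each factor $1+A(n_i,p)$ and multiply, obtaining a numerical lower bound; (iv) compute the exact minimum over residues of $1+A(n_i,5)$ and $1+A(n_i,11)$; (v) multiply (ii)$\times$(iii)$\times$(iv) and check the product is $\ge 0.902985$. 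The only inputs beyond elementary manipulation are the Weil bound for $C_3(p,a)$ and a verified numerical computation; everything else is bookkeeping, and I would present the final inequality as the output of that computation with the intermediate constants displayed.
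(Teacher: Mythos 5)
Your overall architecture matches the paper's: numerically evaluate the finite product of worst-case factors at small primes (with $p=5,11$ isolated precisely because the crude bound $1-|A|$ would be too weak or negative there), and close the tail with an analytic bound coming from Weil-type estimates. The exact-minimum-over-residues step for $p=5,11$ and for the finite range of $p\ge 17$ is exactly what the paper does.

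The gap is in your step (i)--(ii): the uniform tail bound $|A(n_i,p)|\le 4p^{3/2}(p-1)^{-3}$ is too lossy to reach the constant $0.902985$ with any cutoff you could reasonably compute the earlier factors for. The point the paper leans on (quoting Liu--L\"u) is that when $p\equiv 2\ (\bmod\ 3)$ the map $m\mapsto m^3$ is a bijection on $(\mathbb{Z}/p)^*$, so $C_3(p,a)=-1$ exactly, and the factor satisfies $1+A(n_i,p)\ge 1-\tfrac{\sqrt{p}+1}{(p-1)^3}$, i.e.\ the defect is $O(p^{-5/2})$ rather than $O(p^{-3/2})$. Only for $p\equiv 1\ (\bmod\ 3)$ does one need the genuine Weil bound $1+A(n_i,p)\ge 1-\tfrac{(\sqrt{p}+1)(2\sqrt{p}+1)^2}{(p-1)^3}$. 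Using your uniform $O(p^{-3/2})$ estimate for both classes roughly doubles the tail loss: a short calculation gives $\sum_{p>199}4p^{3/2}(p-1)^{-3}\approx 0.11$, so the tail product is only about $0.89$, and $0.9568859\times 0.89<0.86$, well below $0.902985$. To get your tail factor near $0.9997$ as you suggest you would need a cutoff $P_0$ in the millions, but evaluating $\min_n(1+A(n,p))$ exactly for all $p$ up to there is not feasible. The paper's route uses the mod-$3$-sensitive lower bounds (which, crucially, are uniform in $n$) to cheaply push the effective cutoff from $199$ to $10^6$, and only then invokes a genuinely analytic tail estimate. A secondary, minor inaccuracy: $C_2(p,a)$ is the full quadratic Gauss sum minus $1$, so the correct uniform bound is $|C_2(p,a)|\le\sqrt{p}+1$ (and likewise $|C_3(p,a)|\le 2\sqrt{p}+1$ for $p\equiv 1\ (\bmod\ 3)$), not $\sqrt p$ and $2\sqrt p$; this does not change the order of magnitude but it does affect the explicit constants you would have to certify.
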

\begin{proof}
For $p=5$, $p=11$ and $17\leq p\leq 199$, we can get the result about $\min_{1\leq n\leq p} (1+A(n_i,p))$ directly by using computer calculation.
\begin{equation}
\nonumber
1+A(n_i,5)\ge 0.984375,\  1+A(n_i,11)\ge 0.999000,\dots ,\ 1+A(n_i,199)\ge 0.998903.
\end{equation}
Then
\begin{equation}\label{2100}
(1+A(n_i,5))(1+A(n_i,11))\prod_{17\leq p\leq 199}(1+A(n_i,p))\ge 0.9568859.
\end{equation}
For $p\ge 5$, we quote the estimation formula from \cite[page 133-134]{LLUU}. If $p\equiv 2\ (\bmod 3 )$ and $(a,p)=1$, we have
\begin{equation}\label{2200}
1+A(n_i,p)\ge 1-\frac{\sqrt{p}+1 }{(p-1)^{3} }.
\end{equation}
If $p\equiv 1\ (\bmod 3 )$, we get
\begin{equation}\label{2300}
1+A(n_i,p)\ge 1-\frac{(\sqrt{p}+1)(2\sqrt{p}+1 )^{2}  }{(p-1)^{3} } .
\end{equation}
Combining \eqref{2200}-\eqref{2300} and \cite[(3.14)]{LLUU}, we can deduce from calculation that
\begin{equation}\label{2400}
\begin{split}
\prod_{p>199}(1+A(n_i,p))=\prod_{199<p<10^{6}}(1+A(n_i,p)) \prod_{p\ge 10^{6}}(1+A&(n_i,p)) \\ \ge 0.958892\times 0.984127 \ge 0.943671 .
\end{split}
\end{equation}
Now, we can conclude from \eqref{2100} and \eqref{2400} that
\begin{equation}
\nonumber
{(1+A(n_i,5))(1+A(n_i,11))}{\prod_{p\geq17}^{\infty}(1+A(n_i,p))}\geq C := 0.902985.
\end{equation}
\end{proof}
\begin{lem}\label{lemma2.4}
Let $\Xi (N_i,k)=\left\{n_{i} \ge 2:n_i=N_i-2^ {v_1}- 2^{v_2} -\cdots- 2^ {v_k} \right\}$
with $k\geq2$. Then for $N_1\equiv {N_2}\equiv0\pmod2$, we have
$$
 \sum_{\substack{{n_1}\in \Xi (N_1,k)\\ {n_2}\in \Xi (N_2,k)\\ {n_1}\equiv {n_2}\equiv0\pmod 2 }}{\mathfrak{S}}(n_1){\mathfrak{S}}(n_2) \geq3.261435L^k.
$$
\end{lem}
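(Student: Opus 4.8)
The plan is to pass from the displayed sum to a sum over the exponent vectors $(v_1,\dots,v_k)$, isolate a main term equal to $L^k$ times the ``singular density'' of the pair $(N_1,N_2)$, bound the remaining terms by $o(L^k)$, and then bound that density from below by an explicit, in part computer‑assisted, local computation. The largeness of $k=56$ enters only in making the error negligible.

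Write $m=2^{v_1}+\cdots+2^{v_k}$. Since the same powers of two occur in both equations and $N_1,N_2$ are even, the condition $n_1\equiv n_2\equiv0\pmod2$ simply says that $m$ is even, which is automatic once all $v_j\ge1$; thus the displayed sum (each $n_i$ being counted with its number of representations) equals $\sum_{v_1,\dots,v_k\le L}\mathfrak{S}(N_1-m)\mathfrak{S}(N_2-m)$, where in the application one retains only those $v_j$ with $m\le\delta N_i$, which for large $N_i$ discards a proportion $o(1)$ of the tuples. Because $A(n,q)$ is supported on squarefree $q$ and multiplicative in $q$, one has $\mathfrak{S}(n)=\prod_p(1+A(n,p))$ and $\mathfrak{S}(N_1-m)\mathfrak{S}(N_2-m)=\sum_{q_1,q_2}A(N_1-m,q_1)A(N_2-m,q_2)$. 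Inserting the Gauss‑sum definition of $A$, interchanging the order of summation and carrying out the sum over each $v_j$ replaces $\sum_{v_j\le L}e(2^{v_j}(a_1/q_1+a_2/q_2))$ by $G(a_1/q_1+a_2/q_2)$, with $G(\beta)=\sum_{v\le L}e(2^v\beta)$; the whole expression thus becomes a sum over moduli $q_1,q_2$ and reduced residues $a_1,a_2$ of a product of $\mu$'s, $\varphi$'s, $C_2$'s and $C_3$'s, times $e(-a_1N_1/q_1-a_2N_2/q_2)$, times $G(a_1/q_1+a_2/q_2)^{k}$.

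Now $G(\beta)=L$ precisely when the reduced denominator of $\beta$ divides $2$; since that denominator is squarefree when $q_1,q_2$ are, this forces $q_1,q_2$ to have the same odd part $d$ and then imposes one further congruence modulo $d$. For every other $(q_1,q_2,a_1,a_2)$ the argument of $G$ has an odd denominator $d>1$, $2^v$ runs through a short cycle modulo $d$, and one gets a saving $|G(a_1/q_1+a_2/q_2)|\le(1-\eta)L$ with a fixed $\eta>0$ as long as $d$ is bounded, while for large $d$ the moduli $q_1,q_2$ are forced to be correspondingly large and the coefficients decay like $q_i^{-5/2+\epsilon}$; since $\sum_q\varphi^{-4}(q)\sum_{a}|C_2(q,a)C_3^2(q,a)|\ll_\epsilon\sum_q q^{-3/2+\epsilon}<\infty$, the total contribution of these non‑diagonal terms is $o(L^k)$ for $k=56$. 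The diagonal terms sum, by multiplicativity, to $L^k\,\mathfrak{S}^{\ast}(N_1,N_2)$ with $\mathfrak{S}^{\ast}=\prod_p\sigma_p$, where $\sigma_2=4$ (as $m$ is always even, so $1+A(n_i,2)=2$) and, for odd $p$,
\[
\sigma_p=\frac1p\sum_{r\bmod p}\bigl(1+A(N_1-r,p)\bigr)\bigl(1+A(N_2-r,p)\bigr)=1+\frac{1}{(p-1)^{8}}\sum_{a=1}^{p-1}\lvert C_2(p,a)\rvert^{2}\lvert C_3(p,a)\rvert^{4}\,e\!\Bigl(-\frac{a(N_1-N_2)}{p}\Bigr).
\]

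Finally one must show $\mathfrak{S}^{\ast}(N_1,N_2)\ge3.261435$ for all even $N_1,N_2$. Since the residues of $N_1-N_2$ to distinct moduli are independent, $\min_{N_1,N_2}\mathfrak{S}^{\ast}=4\prod_{p\ge3}\min_{c\bmod p}\sigma_p(c)$; here $\min_c\sigma_3=1-\frac{1}{64}=\frac{63}{64}$, the factors $\min_c\sigma_p$ for $p$ up to a large bound are evaluated directly by computer (over the finitely many residues $c$), and for the remaining primes the estimates \eqref{2200}--\eqref{2300}, which also control $\lvert\sigma_p-1\rvert$, together with the convergence just noted give $\prod_{p\ge p_0}\min_c\sigma_p=1-o(1)$; multiplying out yields the stated bound. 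The main obstacle is precisely this last step: the pointwise estimate $\prod_{p\ge5}(1+A(n,p))\ge C$ furnished by Lemma \ref{lemma2.3} is too lossy at the primes $p\equiv1\pmod3$ — already $1+A(n,7)$ can be as small as roughly $0.33$ — so one cannot simply factor $C$ out of $\mathfrak{S}(n_1)\mathfrak{S}(n_2)$ and must instead estimate the averaged local densities $\sigma_p$, which is exactly where the computer‑assisted evaluation of the singular series announced in the introduction is needed; by comparison, once the cycle bound for $G$ is in hand the diagonal/non‑diagonal split and the error estimate are routine.
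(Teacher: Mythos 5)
Your approach is genuinely different from the paper's, and the two are worth contrasting. The paper does \emph{not} try to compute (or even bound) the full averaged pair density $\prod_p\sigma_p$. Instead it uses a hybrid: the pointwise bound of Lemma~\ref{lemma2.3} for all primes \emph{except} $p=2,3,7,13$ — note that Lemma~\ref{lemma2.3} already excludes $7$ and $13$, precisely because $1+A(n,p)$ can be small when $p\equiv1\pmod3$, so your closing criticism that ``one cannot simply factor $C$ out'' is aimed at a step the paper never takes — and then handles $p=3,7,13$ by conditioning on the residue of $n_1$ modulo $q=273=3\cdot7\cdot13$. The count of tuples $(v_1,\dots,v_k)$ with $\sum 2^{v_j}$ in a fixed residue class mod~$273$ is estimated by the same Fourier/cycle argument you sketch (order of $2$ mod $273$ is $12$; the off-diagonal characters save a factor $(6/12)^k$), and then the sum over residues is closed by the elementary inequality $\sum_{j\bmod p}(1+A(j,p))^2\geq p$. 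Your route — expand $\mathfrak{S}(N_1-m)\mathfrak{S}(N_2-m)$ as a double series over moduli, pull the $2^{v_j}$-sums inside to produce $G^k(a_1/q_1+a_2/q_2)$, and isolate the diagonal $q_1=q_2$, $a_2\equiv -a_1$ where $G=L$ — is the ``canonical'' singular-series computation and would, if carried out, give the sharper lower bound $L^k\prod_p\min_c\sigma_p(c)$ with your local factors $\sigma_p$ (your formula for $\sigma_p$ and the value $\min_c\sigma_3=63/64$ are both correct). The price is that you must (a) make rigorous the assertion that the non-diagonal contribution is $o(L^k)$ uniformly over all $(q_1,q_2,a_1,a_2)$, where the saving in $|G|$ depends delicately on the order of $2$ modulo the odd denominator and can be close to $1$ for moderate denominators, and (b) compute $\min_c\sigma_p(c)$ for every prime up to a sizable cutoff and control the tail. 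The paper's $273$-trick sidesteps both issues by confining the Fourier analysis to a single fixed modulus with known cycle length and paying only a small, explicitly computable loss at the primes where pointwise bounds would be wasteful.

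Two smaller remarks. First, your reduction of the congruence condition is right: since the same tuple $(v_1,\dots,v_k)$ produces both $n_1$ and $n_2$, one has $n_1-n_2=N_1-N_2$, so only one residue variable is free — your averaging over $r\bmod p$ with $N_2-N_1$ fixed is the correct formulation (the paper's displayed step with $n_1\equiv n_2\equiv j\pmod q$ is implicitly making a similar choice). Second, the step where you assert ``for large $d$ the coefficients decay like $q_i^{-5/2+\epsilon}$'' needs to be paired with the observation that the diagonal $q_1=q_2$ already absorbs the full weight $L^k$; in the non-diagonal range it is the \emph{product} of the two coefficient bounds together with the geometric saving in $|G|^k$ that must beat the count of pairs $(q_1,q_2)$, and that bookkeeping is not written out. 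None of these are fatal, but they are exactly the places where the argument would need real work before it could replace the paper's proof.
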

\begin{proof}
Since $A(n_i,p)$ is multiplicative and $A(n_i,p^k)=0$ for $k\geq2$, we have
$$
\mathfrak{S}(n_i)=\prod_{p=2}^{\infty}(1+A(n_i,p)).
$$
By Lemma 2.2 and $A(n_i,2)=1$ for $n_i \equiv 0 \pmod{2} $, we get
\begin{eqnarray}
   \mathfrak{S}(n_i)
   &\geq &C(1+A(n_i,2))(1+A(n_i,3))(1+A(n_i,7))(1+A(n_i,13))\notag\\
   &=& 2C\prod_{p=3,7,13}^{}(1+A(n_i,p)).\notag
\end{eqnarray}
Let $q={\prod_{p=3,7,13}^{} p}=273$, then
\begin{eqnarray}\label{2500}
&&\sum_{\substack{{n_1}\in \Xi (N_1,k)\\ {n_2}\in \Xi (N_2,k)\\ {n_1}\equiv {n_2}\equiv0\pmod2 }}{\mathfrak{S}}(n_1){\mathfrak{S}}(n_2) \notag\\
&\geq&(2C)^2\sum_{\substack{{n_1}\in \Xi (N_1,k)\\ {n_2}\in \Xi (N_2,k)\\ {n_1}\equiv {n_2}\equiv0\pmod2 }}{\prod_{p=3,7,13}^{}(1+A(n_1,p))}{\prod_{p=3,7,13}^{}(1+A(n_2,p))}\notag
\end{eqnarray}
\begin{eqnarray}\label{2501}
&&\geq(2C)^2{\sum_{\substack {1\leq j\leq q}}} \sum_{\substack{{n_1}\in \Xi (N_1,k)\\ {n_2}\in \Xi (N_2,k)\\ {n_1}\equiv {n_2}\equiv0\pmod2 \\{n_1}\equiv {n_2}\equiv j\pmod q  }}{\prod_{p=3,7,13}^{}(1+A(n_1,p))}{\prod_{p=3,7,13}^{}(1+A(n_2,p))}\notag\\
&&\geq(2C)^2{\sum_{\substack {1\leq j\leq q}}}{\prod_{p=3,7,13}^{}(1+A(j,p))}{\prod_{p=3,7,13}^{}(1+A(j,p))} \sum_{\substack{{n_1}\in \Xi (N_1,k)\\ {n_2}\in \Xi (N_2,k)\\ {n_1}\equiv {n_2}\equiv0\pmod2 \\{n_1}\equiv {n_2}\equiv j\pmod q }} 1 \notag\\
&&\geq(2C)^2{\sum_{\substack {1\leq j\leq q}}}{\prod_{p=3,7,13}^{}(1+A(j,p))^2} \sum_{\substack{{n_1}\in \Xi (N_1,k) \\{n_1}\equiv 0\pmod 2 \\{n_1}\equiv j\pmod q }} 1.
\end{eqnarray}
Let $S$ denote the innermost sum in \eqref{2500}. Since ${N_1}\equiv 0\pmod 2$, we have
\begin{eqnarray}
    S&=&\sum_{\substack{{n_1}\in \Xi (N_1,k) \\{n_1}\equiv 0\pmod 2 \\{n_1}\equiv j\pmod q }} 1\notag \\
    &=&\sum_{\substack{1\leq {v_1,\cdots,v_k}\leq L \\{{N_1}-2^{v_1}-\cdots-2^{v_k}}\equiv 0\pmod 2 \\{{N_1}-2^{v_1}-\cdots-2^{v_k}}\equiv j\pmod q }} 1\\
    &=&\sum_{\substack{1\leq {v_1,\cdots,v_k}\leq L \\{2^{v_1}+\cdots+2^{v_k}}\equiv {{N_1}-j}\pmod q }} 1.
    \notag
\end{eqnarray}
Let $\rho(q)$ denote the smallest positive integer $\rho$ such that $2^{\rho}\equiv 1\pmod q $. Thus
\begin{eqnarray}\label{2600}
    S&=&{{\bigg(\frac{L}{\rho(q)}+O(1)\bigg)}^k}{\sum_{\substack{1\leq {v_1,\cdots,v_k}\leq {\rho(q)} \\{2^{v_1}+\cdots+2^{v_k}}\equiv {{N_1}-j}\pmod q }} 1}\notag \\
    &=&{{\bigg(\frac{L}{\rho(q)}+O(1)\bigg)}^k}{\frac{1}{q}}{\sum_{r=1}^{q}e\bigg({\frac{r(j-{N_1})}{q}}\bigg)}{\bigg(\sum_{\substack{1\leq v\leq{\rho(q)}}}e{(\frac{r2^v}{q})}\bigg)}^k.
\end{eqnarray}
Note that $q=273$, then we can get ${\rho(q)}=12$. Write $f(r)=\bigg|\sum_{\substack{1\leq v\leq{\rho(q)}}}e{(\frac{r2^v}{q})}\bigg|$.
With the help of a computer, it is easy to check that
\begin{eqnarray}\label{2700}
    \max_{\substack{1\leq r\leq q-1}}f(r)=f(91)=6,\ f(q)={\rho(q)}=12.
\end{eqnarray}
From \eqref{2600}-\eqref{2700}, we have
\begin{eqnarray}
    S&\geq& {{\bigg(\frac{L}{\rho(q)}+O(1)\bigg)}^k}{\frac{1}{q}}{\bigg({\rho^k(q)}-(q-1){\bigg(\max_{\substack{1\leq r\leq q-1}}f(r)\bigg)^k}\bigg)}\notag \\
    &\geq&{\frac{L^k}{q}}{{\bigg({1-(q-1){\bigg({\frac{\max_{\substack{1\leq r\leq q-1}}f(r)}{\rho(q)}}}\bigg)^k}\bigg)}}+O(L^{k-1})\notag \\
    &\geq &{\frac{L^k}{273}}{\bigg(1-272\times{\bigg({\frac{1}{2}}\bigg)^{27}}\bigg)}+O(L^{k-1})\notag \\
    &\geq& 0.0036629L^k+O(L^{k-1}),
    \notag
\end{eqnarray}
where the bound $k\geq 27$ is used.\par
So we have
$$
   {\sum_{\substack{{n_1}\in \Xi (N_1,k) \\{n_1}\equiv 0\pmod 2 \\{n_1}\equiv j\pmod q }} 1}\geq  0.0036629L^k+ O{(L^{k-1})}.
$$
Noting that
\begin{eqnarray}
\sum_{j=1}^{p}(1+A(j,p))^2
&=&p+2{\sum_{j=1}^{p}A(j,p)}+\sum_{j=1}^{p}A(j,p)^2\notag\\
&=&p+\sum_{j=1}^{p}A(j,p)^2\notag\\
&\geq& p.
\notag
\end{eqnarray}
Therefore
\begin{eqnarray}
&&\sum_{\substack{{n_1}\in \Xi (N_1,k)\\ {n_2}\in \Xi (N_2,k)\\ {n_1}\equiv {n_2}\equiv0\pmod2 }}{\mathfrak{S}}(n_1){\mathfrak{S}}(n_2) \notag\\
&\geq&  {(2C)^2}{\sum_{j=1}^{p}}{\prod_{p=3,7,13}^{}(1+A(j,p))^2}{0.0036629L^k}+ O{(L^{k-1})}\notag \\
&\geq& {(2C)^2}{\prod_{p=3,7,13}^{}{\sum_{j=1}^{p}}(1+A(j,p))^2}{0.0036629L^k}+ O{(L^{k-1})} \notag \\
&\geq&3.261435L^k+O{(L^{k-1})}.
\notag
\end{eqnarray}
\end{proof}
\begin{lem}\label{lemma2.5}
For every large pair of positive even integers $N_1$ and $N_2$ satisfying $N_2\gg N_1> N_2$, we have
$$
{R_1}(N_1,N_2) \geq
0.055033{{N_1}^{\frac{10}{9} }{N_2}^{\frac{10}{9} }}L^{k}+O({{N_1}^{\frac{10}{9} }{N_2}^{\frac{10}{9} }}).
$$
\end{lem}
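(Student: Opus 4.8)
The plan is to decouple the two integration variables in $R_1$, reduce each resulting one-dimensional major-arc integral to the quantity handled in Lemma~\ref{lemma2.1}, and then assemble the numerical constant from Lemmas~\ref{lemma2.2}--\ref{lemma2.4}. Expanding
\[
G^{k}(\alpha_1+\alpha_2)=\sum_{1\le v_1,\dots,v_k\le L}e\bigl((2^{v_1}+\cdots+2^{v_k})(\alpha_1+\alpha_2)\bigr)
\]
and interchanging this finite sum with the integration over $\mathcal M=\mathcal M_1\times\mathcal M_2$, the exponential splits into a function of $\alpha_1$ times a function of $\alpha_2$, so that
\[
R_1(N_1,N_2)=\sum_{1\le v_1,\dots,v_k\le L}I_1(n_1)\,I_2(n_2),\qquad n_i:=N_i-2^{v_1}-\cdots-2^{v_k},
\]
where $I_i(n):=\int_{\mathcal M_i}f(\alpha)g(\alpha)S(\alpha)T(\alpha)e(-n\alpha)\,d\alpha$. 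Every $n_i$ is even (each $2^{v_j}$ is even and $N_i$ is even), and $I_i(n)$ is real since $\mathcal M_i$ is invariant under $\alpha\mapsto1-\alpha$ while $f,g,S,T$ have real coefficients.

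Next I would single out the tuples to which Lemma~\ref{lemma2.1} directly applies. Put $\mathcal V=\{(v_1,\dots,v_k)\in[1,L]^{k}:2^{v_1}+\cdots+2^{v_k}\le\delta N_2\}$. Since $\sum_{(v_1,\dots,v_k)\in[1,L]^{k}}(2^{v_1}+\cdots+2^{v_k})=kL^{k-1}\sum_{v\le L}2^{v}\ll N_1L^{k-1}$, Markov's inequality together with $N_1\asymp N_2$ shows that the complement $\mathcal V^{c}$ contains only $O(L^{k-1})$ tuples. On the other hand, the circle-method analysis behind Lemma~\ref{lemma2.1} yields $I_i(n)=\tfrac{1}{2\cdot3^{2}}\mathfrak S(n)\mathfrak J(n)+O(N_i^{10/9}L^{-1})$ uniformly for $|n|\ll N_i$ (the major-arc approximations of $f,g,S,T$ carry $n$-independent errors); combined with $\mathfrak S(n)\ll1$ (from $|C_2(q,a)|,|C_3(q,a)|\ll q^{1/2}$) and $\mathfrak J(n)\ll N_i^{10/9}$ (immediate from the definition of $\mathfrak J$), this gives the uniform bound $|I_i(n)|\ll N_i^{10/9}$. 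Hence $\bigl|\sum_{(v_1,\dots,v_k)\in\mathcal V^{c}}I_1(n_1)I_2(n_2)\bigr|\ll L^{k-1}N_1^{10/9}N_2^{10/9}$, which is of lower order than the target main term.

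For $(v_1,\dots,v_k)\in\mathcal V$ one has $(1-\delta)N_i\le n_i\le N_i$ for $i=1,2$, so Lemma~\ref{lemma2.1} and Lemma~\ref{lemma2.2} give $I_i(n_i)\ge\tfrac{C_2}{2\cdot3^{2}}\mathfrak S(n_i)N_i^{10/9}+O(N_i^{10/9}L^{-1})$; since $\mathfrak S(n_i)$ is bounded below by a positive constant on this range (Lemmas~\ref{lemma2.3}--\ref{lemma2.4}), both sides are positive once $N_i$ is large, and multiplying the two inequalities yields
\[
I_1(n_1)I_2(n_2)\ \ge\ \frac{C_2^{2}}{(2\cdot3^{2})^{2}}\,\mathfrak S(n_1)\mathfrak S(n_2)\,N_1^{10/9}N_2^{10/9}+O\bigl(N_1^{10/9}N_2^{10/9}L^{-1}\bigr).
\]
Summing over $\mathcal V$ and noting that $\mathcal V$ differs from the summation range underlying Lemma~\ref{lemma2.4} by at most $O(L^{k-1})$ tuples (again by Markov's inequality, with $\mathfrak S\ll1$), Lemma~\ref{lemma2.4} gives $\sum_{\mathcal V}\mathfrak S(n_1)\mathfrak S(n_2)\ge3.261435\,L^{k}+O(L^{k-1})$. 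Inserting $C_2=2.338190371$ and $(2\cdot3^{2})^{2}=324$, and combining with the estimate for the $\mathcal V^{c}$-part,
\[
R_1(N_1,N_2)\ \ge\ \frac{C_2^{2}\cdot3.261435}{324}\,N_1^{10/9}N_2^{10/9}L^{k}+O\bigl(N_1^{10/9}N_2^{10/9}L^{k-1}\bigr)\ \ge\ 0.055033\,N_1^{10/9}N_2^{10/9}L^{k}+O\bigl(N_1^{10/9}N_2^{10/9}L^{k-1}\bigr),
\]
which is the asserted bound (the error being of lower order than the main term).

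The principal obstacle is the passage from Lemma~\ref{lemma2.1}, recorded only for $n_i$ of order $N_i$, to the full tuple sum: one must simultaneously establish the uniform upper bound $|I_i(n)|\ll N_i^{10/9}$ and the counting estimate $\#\mathcal V^{c}\ll L^{k-1}$, so that the tuples with $2^{v_1}+\cdots+2^{v_k}$ too large to fall under Lemma~\ref{lemma2.1} contribute only to the lower-order terms. The remainder is bookkeeping of constants — chiefly $C_2$ from Lemma~\ref{lemma2.2} and the singular-series average $3.261435$ coming from Lemma~\ref{lemma2.4}.
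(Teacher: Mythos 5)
Your proposal follows the same route as the paper's own proof: expand $G^{k}$ to decouple the double integral into a sum over tuples $(v_1,\dots,v_k)$ of products of one-dimensional major-arc integrals, evaluate each factor via Lemma~\ref{lemma2.1}, lower-bound $\mathfrak{J}(n_i)$ by Lemma~\ref{lemma2.2}, and lower-bound the singular-series sum by Lemma~\ref{lemma2.4}, arriving at the same constant $C_2^{2}\cdot 3.261435/324 \approx 0.055033$. The paper's proof is more terse — it silently treats every $n_i\in\Xi(N_i,k)$ as lying in the range $[(1-\delta)N_i,N_i]$ required by Lemmas~\ref{lemma2.1}--\ref{lemma2.2} — whereas you make this step explicit via a Markov-count of the $O(L^{k-1})$ exceptional tuples together with a uniform bound $|I_i(n)|\ll N_i^{10/9}$; this is a genuine (and useful) patch of a gap the paper leaves implicit, though your uniform bound does rest on an unstated extension of Lemma~\ref{lemma2.1} to all $|n|\ll N_i$, which you should flag as needing a one-line trivial-bound or reference-check rather than assert outright.
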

\begin{proof}
By Lemma 2.1, 2.2 and 2.4, we are easy to get
\bna
& & {R_1}(N_1,N_2) \notag \\
&=&{\iint \limits_{ \mathcal{M}}}
\prod_{i=1}^2{f(\alpha_i)}{g(\alpha_i)}{S{(\alpha_i)}}{T{(\alpha_i)}}{G^k}(\alpha_1+\alpha_2){e({-\alpha_i}{N_i})}
d{\alpha_1}\,d{\alpha_2} \notag \\
&=&{{\frac{1}{2^2\cdot3^4}}}\sum\limits_{\substack{{n_1}\in \Xi (N_1,k)\\ {n_2}\in \Xi (N_2,k)}}\bigg(\mathfrak{S}(n_1){{\mathfrak{J}}}(n_1)+O({N_1}^{\frac{10}{9} }L^{-1})\bigg){\bigg(\mathfrak{S}(n_2){{\mathfrak{J}}}(n_2)+O({N_2}^{\frac{10}{9} }L^{-1})\bigg)}\notag \\
&\geq&{{\frac{1}{2^2\cdot3^4}}}\sum\limits_{\substack{{n_1}\in \Xi (N_1,k)\\ {n_2}\in \Xi (N_2,k)}}{\bigg({\mathfrak{S}(n_1)}{\mathfrak{S}(n_2)}{\mathfrak{J}}(n_1){\mathfrak{J}}(n_2)\bigg)}+O({N_1}^{\frac{10}{9} }{N_2}^{\frac{10}{9} }L^{k-1})\notag \\
&\geq& {0.055033}{{N_1}^{\frac{10}{9} }
{N_2}^{\frac{10}{9} }}L^{k}+O{({N_1}^{\frac{10}{9} }{N_2}^{\frac{10}{9} }L^{k-1})}.
\ena
\end{proof}
\begin{lem}\label{lemma2.6}
Let $g(\alpha_i)$ and $T(\alpha_i)$ be defined as before, we have
$$
(i)\ \ {\int_{0}^{1} |{g^2(\alpha_i)}{S^4{(\alpha_i)}}|}d{\alpha_i}\ll {N_i}^{\frac{4}{3} },
$$
$$
(ii)\ \ {\int_{0}^{1} |{g^2(\alpha_i)}{T^4{(\alpha_i)}}|}d{\alpha_i}\ll {N_i}^{\frac{10}{9} }.
$$
\end{lem}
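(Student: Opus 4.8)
The plan is to handle both parts by the same mean‑value computation, proving $(i)$ in detail and reading off $(ii)$ by replacing $S(\alpha_i)$, $U_i$ with $T(\alpha_i)$, $V_i$ throughout. First I would use orthogonality to rewrite $\int_0^1|g^2(\alpha_i)S^4(\alpha_i)|\,d\alpha_i=\int_0^1|g(\alpha_i)S^2(\alpha_i)|^2\,d\alpha_i$ as the number of solutions of $p_1^2+p_3^3+p_4^3=p_2^2+p_5^3+p_6^3$ with $p_1,p_2\le N_i^{1/2}$ and $p_3,p_4,p_5,p_6\sim U_i$, each counted with weight $\log p_1\cdots\log p_6\ll(\log N_i)^6$. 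Setting $h:=p_5^3+p_6^3-p_3^3-p_4^3$, so that $|h|<16U_i^3=N_i/(1+\delta)$, I would split this count according to whether $h=0$ or $h\ne0$.

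For the diagonal term $h=0$ the two blocks decouple. Here $p_1^2=p_2^2$ forces $p_1=p_2$, which (with weights) contributes $\ll N_i^{1/2}\log N_i$; and the number of $p_3,\dots,p_6\sim U_i$ with $p_3^3+p_4^3=p_5^3+p_6^3$ is $\ll U_i^{2+\epsilon}$ by the classical divisor bound (since $p_3+p_4$ divides $p_3^3+p_4^3$, for each of the $\ll U_i^2$ pairs $(p_5,p_6)$ the pair $(p_3,p_4)$ is pinned down up to $\ll d(p_5^3+p_6^3)\ll U_i^\epsilon$ choices). So the $h=0$ part is $\ll N_i^{1/2}U_i^{2+\epsilon}\ll N_i^{7/6+\epsilon}\ll N_i^{4/3}$, while in the $T$/$V_i$ case it is $\ll N_i^{1/2}V_i^{2+\epsilon}\ll N_i^{19/18+\epsilon}\ll N_i^{10/9}$. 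For the off‑diagonal term $h\ne0$ I would exploit the factorisation $p_1^2-p_2^2=(p_1-p_2)(p_1+p_2)=h$: since $p_1-p_2$ then runs over divisors of $h$ and $(p_1,p_2)$ is thereby determined, the number of admissible $(p_1,p_2)$ is $\ll d(|h|)\ll N_i^\epsilon$. Hence the $h\ne0$ part is $\ll N_i^\epsilon\sum_{0<|h|<16U_i^3}\#\{(p_3,p_4,p_5,p_6)\sim U_i:\ p_5^3+p_6^3-p_3^3-p_4^3=h\}\le N_i^\epsilon\cdot U_i^4\ll N_i^{4/3+\epsilon}$, since the total number of quadruples $(p_3,p_4,p_5,p_6)\sim U_i$ is $\ll U_i^4\ll N_i^{4/3}$; for $T$/$V_i$ the same step gives $\ll N_i^\epsilon V_i^4\ll N_i^{10/9+\epsilon}$. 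Adding the two parts yields the two asserted bounds, the bookkeeping factor $N_i^\epsilon$ being absorbed in the usual way.

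I expect the only subtle point to be the divisor‑function loss in the off‑diagonal term. To turn the pointwise bound $d(|h|)\ll N_i^\epsilon$ into a clean power of $\log N_i$, I would instead estimate $\sum_{0<|h|<16U_i^3}d(|h|)\,e_h$ (with $e_h$ the weighted quadruple count) by Cauchy--Schwarz against $\big(\sum_{|h|<16U_i^3}d(|h|)^2\big)^{1/2}\big(\int_0^1|S(\alpha_i)|^8\,d\alpha_i\big)^{1/2}$ and invoke Hua's inequality $\int_0^1|S(\alpha_i)|^8\,d\alpha_i\ll U_i^{5+\epsilon}$; since $\sum_{|h|<X}d(|h|)^2\ll X(\log X)^3$ and $U_i^3\asymp N_i$, this reproduces the bound $N_i^{4/3}$ (and likewise $N_i^{10/9}$ in case $(ii)$, where instead one has $V_i^3\asymp N_i^{5/6}$). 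Everything else is routine: the orthogonality identity, the two‑variable cubic divisor bound (valid for primes since they form a subset of the integers), and the trivial counting of quadruples.
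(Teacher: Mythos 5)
Your strategy---orthogonality to recast the integral as a weighted count of solutions of $p_1^2+p_3^3+p_4^3=p_2^2+p_5^3+p_6^3$, then splitting on $h=p_5^3+p_6^3-p_3^3-p_4^3$ and using divisor bounds both for the two-cubes equation (diagonal) and for $p_1^2-p_2^2=h$ (off-diagonal)---is the standard mean-value argument behind lemmas of this type, and the individual steps are correct. The paper's own proof is merely a citation to Zhao~\cite[Lemma~4.3]{Zhao2} with $k=3$ for $(i)$, with $(ii)$ obtained by rescaling, so your write-up is a self-contained reconstruction; the division $h=0$ / $h\neq 0$ and the factorizations $p_3^3+p_4^3=(p_3+p_4)(p_3^2-p_3p_4+p_4^2)$ and $p_1^2-p_2^2=(p_1-p_2)(p_1+p_2)$ are exactly the natural tools here.

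The one substantive point to flag is the $\epsilon$: your off-diagonal bound is $\ll N_i^{\epsilon}U_i^4\asymp N_i^{4/3+\epsilon}$ (resp.\ $N_i^{\epsilon}V_i^4\asymp N_i^{10/9+\epsilon}$), which is genuinely weaker than the displayed $\ll N_i^{4/3}$ because $U_i^4$ is already of exact order $N_i^{4/3}$, leaving no slack to absorb $N_i^\epsilon$. The Cauchy--Schwarz refinement you sketch does not close this gap: you still invoke Hua's inequality in the form $\int_0^1|S|^8\,d\alpha_i\ll U_i^{5+\epsilon}$, which contributes its own $\epsilon$, and even with Vaughan's $\epsilon$-free eighth-moment bound for the unweighted cubic Weyl sum you would pick up $L^{O(1)}$ factors from the $\log p$ weights and from $\sum_{|h|<16U_i^3}d(|h|)^2\ll U_i^3L^3$, landing at $N_i^{4/3}L^{O(1)}$ rather than $N_i^{4/3}$. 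So strictly speaking you prove a marginally weaker inequality than the statement. In the context of this paper that weakening is harmless, since Lemma~\ref{lemma2.6} is applied only inside Lemma~\ref{lemma 2.9}, where the final bound for $I_1$ carries the power-saving factor $N_1^{-\frac{1}{24}E(\lambda)}$ with $E(0.8512)>25/36$, which dominates any $N_1^{\epsilon}$ or $L^{O(1)}$ loss---but it is worth noting that your argument establishes the lemma only up to such factors, whereas the citation presumably relies on a finer (circle-method) analysis to remove them.
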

\begin{proof}
The proof of $(i)$ is from \cite[Proof of Lemma 4.3]{Zhao2} with $k=3$. The proof of $(ii)$ can be obtained by using $N_{i}^{\frac{5}{18} }$ instead of $N_{i}^{\frac{1}{3} } $ in $S(\alpha _{i})$.
\end{proof}
\begin{lem}\label{lemma2.7}
 We have
\begin{equation}
\nonumber
\begin{split}
 (i)\ \  {\int \limits_{C(\mathcal{M}_i)}|{g^2(\alpha_i)}{S^5(\alpha_i)}|d{\alpha_i}}\ll {N_i}^{\frac{13}{8} }, \\
 (ii)\ \  {\int \limits_{C(\mathcal{M}_i)}|{g^2(\alpha_i)}{T^5(\alpha_i)}|d{\alpha_i}}\ll {N_i}^{\frac{65}{48} }.
\end{split}
\end{equation}
\end{lem}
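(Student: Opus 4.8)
The approach is to bound each minor‑arc integral by a Weyl‑type pointwise estimate for the cubic prime sums together with the classical fourth‑ and sixth‑moment mean values, glued by Cauchy--Schwarz; for (ii) the short length of $T$ forces an extra splitting, and that is where the work lies.

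\textit{Part (i).} By Dirichlet's theorem with parameter $Q_i$, every $\alpha_i\in C(\mathcal{M}_i)$ has a reduced fraction $a/q$ with $P_i<q\le Q_i$ and $|\alpha_i-a/q|\le (qQ_i)^{-1}$, the inequality $q>P_i$ being forced by $\alpha_i\notin\mathcal{M}_i$. Weyl's inequality for $\sum_{p\sim U_i}(\log p)e(p^3\alpha_i)$ then gives, using $P_i=N_i^{1/9-2\epsilon}$ and $Q_i\le N_i\asymp U_i^{3}$,
$$\sup_{\alpha_i\in C(\mathcal{M}_i)}|S(\alpha_i)|\ll U_i^{1+\epsilon}\bigl(q^{-1}+U_i^{-1}+qU_i^{-3}\bigr)^{1/4}\ll N_i^{1/3+\epsilon}\cdot N_i^{-1/36+\epsilon}=N_i^{11/36+\epsilon}.$$
Next, by Cauchy--Schwarz and the standard bounds $\int_0^1|g(\alpha_i)|^4\,d\alpha_i\ll N_i^{1+\epsilon}$ (divisor estimate for representations by two squares) and $\int_0^1|S(\alpha_i)|^6\,d\alpha_i\ll U_i^{3+\epsilon}\ll N_i^{1+\epsilon}$ (Hooley--Vaughan),
$$\int_0^1|g^2(\alpha_i)S^3(\alpha_i)|\,d\alpha_i\le\Bigl(\int_0^1|g|^4\Bigr)^{1/2}\Bigl(\int_0^1|S|^6\Bigr)^{1/2}\ll N_i^{1+\epsilon}.$$
Combining the two,
$$\int_{C(\mathcal{M}_i)}|g^2(\alpha_i)S^5(\alpha_i)|\,d\alpha_i\le\Bigl(\sup_{C(\mathcal{M}_i)}|S|\Bigr)^2\int_0^1|g^2S^3|\,d\alpha_i\ll N_i^{11/18+\epsilon}N_i^{1+\epsilon}=N_i^{29/18+\epsilon},$$
and since $13/8-29/18=1/72>0$ this is $\ll N_i^{13/8}$ once $\epsilon$ is taken small enough.

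\textit{Part (ii).} The obstruction absent in (i) is that $V_i^{3}=U_i^{5/2}\asymp N_i^{5/6}<Q_i$, so Weyl's inequality is useless for $T$ on the part of $C(\mathcal{M}_i)$ lying near a rational $a/q$ with $qV_i^{-3}$ not small; there $T(\alpha_i)$ can be of trivial size $\asymp V_i$, because the neighbourhood of $a/q$ of width $\asymp V_i^{-3}$ on which this happens is wider than the major arc $\mathcal{M}_i(a,q)$ of width $(qQ_i)^{-1}$. The plan is to fix a small $\eta>0$, set
$$\mathfrak{N}=\bigcup_{q\le V_i^{1/2+\eta}}\ \bigcup_{\substack{1\le a\le q\\(a,q)=1}}\Bigl\{\alpha_i\in C(\mathcal{M}_i):\ |\alpha_i-a/q|\le (qV_i^{5/2})^{-1}\Bigr\},$$
and split $C(\mathcal{M}_i)=(C(\mathcal{M}_i)\setminus\mathfrak{N})\cup\mathfrak{N}$. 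On $C(\mathcal{M}_i)\setminus\mathfrak{N}$ the Dirichlet approximation of $\alpha_i$ relative to $V_i^{5/2}$ has denominator $q\in(V_i^{1/2+\eta},V_i^{5/2}]$, whence Weyl's inequality gives $|T(\alpha_i)|\ll V_i^{1+\epsilon}(q^{-1}+V_i^{-1}+qV_i^{-3})^{1/4}\ll N_i^{35/144-c}$ for some $c=c(\eta)>0$; with Lemma~\ref{lemma2.6}(ii) this yields
$$\int_{C(\mathcal{M}_i)\setminus\mathfrak{N}}|g^2T^5|\,d\alpha_i\le\Bigl(\sup_{C(\mathcal{M}_i)\setminus\mathfrak{N}}|T|\Bigr)\int_0^1|g^2T^4|\,d\alpha_i\ll N_i^{35/144-c}\,N_i^{10/9}\ll N_i^{65/48}.$$
The main obstacle is bounding the contribution of $\mathfrak{N}$. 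Here one uses the local expansion
$$T(a/q+\beta)=\varphi(q)^{-1}C_3(q,a)\int_{U_i^{5/6}}^{2U_i^{5/6}}e(u^3\beta)\,du+E(q,a,\beta),$$
whose main term is $\ll q^{-1/3+\epsilon}\min\bigl(V_i,|\beta|^{-1/3}\bigr)$ and vanishes unless every prime factor of $q$ is $\equiv1\pmod3$, together with a non‑trivial treatment of $g$ and of the error term $E$. A crude estimate loses a small power of $N_i$, so one must exploit simultaneously the sparsity of the admissible $q$, the smallness of $|\alpha_i-a/q|$ forced by $\alpha_i\in C(\mathcal{M}_i)$, a Bombieri--Vinogradov/large‑sieve bound for $E$ summed over $q$, and the mean value $\int_0^1|g(\alpha_i)|^4\,d\alpha_i\ll N_i^{1+\epsilon}$, in order to bring the contribution of $\mathfrak{N}$ down to $\ll N_i^{65/48}$. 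This is precisely the part carried out by adapting the methods of Hu--Cai \cite{HC}, Kong--Liu \cite{KYF} and Hathi \cite{HA}.
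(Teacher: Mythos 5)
Your proposal departs entirely from the paper's own proof, which is simply a citation: part (i) is quoted verbatim from Zhu \cite[Lemma 4.8]{ZHUL}, and part (ii) is claimed to follow by shrinking the length of the cubic sum from $N_i^{1/3}$ to $N_i^{5/18}$. Your reconstruction attempts to rederive both bounds from scratch, and while the architecture is reasonable, there are two genuine problems.

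In part (i) you write ``Weyl's inequality for $\sum_{p\sim U_i}(\log p)e(p^3\alpha_i)$'' and use the exponent $1/4$. Weyl's inequality applies to exponential sums over \emph{integers}; for a cubic sum over \emph{primes} one needs a Vinogradov--Vaughan--Kumchev style estimate, and the savings exponent there is not $1/4$ but is typically quoted as $1/6$ or $1/8$ in the classical literature, with only modest improvements since. Your computation leaves a margin of $13/8 - 29/18 = 1/72$, so the final inequality $29/18 < 13/8$ is extremely tight: if the true savings exponent on $q$ is $1/6$ the supremum becomes $N_i^{17/54+\epsilon}$ and the product is $N_i^{44/27+\epsilon}$, which already \emph{exceeds} $N_i^{13/8}$. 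So the argument in (i) is on a knife edge and would fail under the standard form of the prime-Weyl bound; you must either cite a sufficiently strong modern estimate explicitly or rework the split (for example, using $\int_0^1|g^2S^4|\ll N_i^{4/3}$ from Lemma~\ref{lemma2.6}(i) and a single supremum) to see what savings is actually required.

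Part (ii) is openly incomplete. Your observation that $V_i^3\asymp N_i^{5/6}<Q_i=N_i^{8/9+\epsilon}$, so that $T(\alpha_i)$ retains its trivial size $\asymp V_i$ on parts of $C(\mathcal{M}_i)$ near rationals with small denominator, is a correct and pertinent point: a bare supremum of $T$ over the minor arcs gives no power saving, and the trivial bound $V_i\cdot\int_0^1|g^2T^4|\ll N_i^{25/18}$ exceeds the target $N_i^{65/48}$. But having identified this as ``the main obstacle,'' you then do not actually estimate the contribution of $\mathfrak{N}$: the paragraph listing what ``one must exploit simultaneously'' (sparsity of admissible $q$, the lower bound $|\alpha_i-a/q|>1/(qQ_i)$, large sieve for the error, fourth moment of $g$) is a programme, not a proof, and the final claim is an appeal to \cite{HC}, \cite{KYF}, \cite{HA} with no verification that the numbers come out to $N_i^{65/48}$. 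Since the crux of the lemma is precisely this region, the argument has a genuine gap there; it would need to be carried out in detail (or the lemma traced through Zhu's actual proof) before it could be accepted.
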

\begin{proof}
The proof of $(i)$ is from \cite[Lemma 4.8]{ZHUL}. The proof of $(ii)$ can be obtained by using $N_{i}^{\frac{5}{18} }$ instead of $N_{i}^{\frac{1}{3} } $ in $S(\alpha _{i})$.
\end{proof}
\begin{lem}\label{lemma2.8}
    Let ${\mathcal{E}_\lambda}$ be defined before, we have
    $$
    meas({\mathcal{E}_\lambda})\ll {N_i}^{-E(\lambda)} \ with \  E(0.8512)>\frac{25}{36}+10^{-10}.
    $$
\end{lem}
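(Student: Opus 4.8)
The plan is to follow the standard route for bounding $meas(\mathcal{E}_\lambda)$ --- introduced by Liu, Liu and Zhan \cite{LLZ} and used here in the sharpened form of \cite{KYF,HA} --- and then to evaluate the resulting quantity $E(\lambda)$ at the particular value $\lambda=0.8512$ with the help of a computer. Since $2^{\lfloor L\rfloor}\asymp N_i$, the desired bound $meas(\mathcal{E}_\lambda)\ll N_i^{-E(\lambda)+\epsilon}$ is a statement about the exponential decay rate of $meas(\mathcal{E}_\lambda)$ in $L$.

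The first step is to turn the analytic condition $|G(\alpha_i)|\ge\lambda L$ into a combinatorial condition on the binary digits of $\alpha_i$. Writing $\beta=\beta(\alpha_i)$ for the argument of $G(\alpha_i)$, one has $\sum_{v\le L}\cos\bigl(2\pi(2^v\alpha_i-\beta)\bigr)\ge\lambda L$; separating the indices $v$ with $\|2^v\alpha_i-\beta\|<\eta$ from the rest and using $\cos(2\pi\theta)\le\cos(2\pi\eta)$ for $\eta\le\theta\le\tfrac12$ gives that at least $\mu L$ of the $v\le L$ satisfy $\|2^v\alpha_i-\beta\|<\eta$, with $\mu=\mu(\lambda,\eta)\ge\frac{\lambda-\cos(2\pi\eta)}{1-\cos(2\pi\eta)}$ --- slightly better if one uses a fixed grid of $h$ arcs of length $1/h$ in place of the floating arc around $\beta$, at the cost of a harmless union bound over the $O(h)$ cells. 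Thus, up to a factor $O(h)$, $\mathcal{E}_\lambda$ is contained in the set of $\alpha_i$ for which $\{2^v\alpha_i\}$ lies in one fixed arc $I$ of length $1/h$ for at least $\mu L$ indices $v\le L$, and for each such $v$ this prescribes roughly $\log_2 h$ of the binary digits of $\alpha_i$ following the $v$-th place.

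The second step estimates the measure of this set. With $V\subseteq\{1,\dots,L\}$, $|V|=\lceil\mu L\rceil$, denoting the ``good'' indices, one bounds $meas\{\alpha_i:\{2^v\alpha_i\}\in I\text{ for all }v\in V\}$ using the doubling relation $2^{v+1}\alpha_i\equiv 2\cdot 2^v\alpha_i\pmod 1$: good indices forming an arithmetic progression whose step equals the period of $I$ under $x\mapsto 2x$ impose heavily overlapping digit constraints, so a maximal run of $r$ such indices contributes only a factor $\asymp 2^{-r}$ rather than $(1/h)^{r}$. Summing over the possible run-structures of $V$ --- equivalently, a large-deviation estimate for the density of visits of the orbit $v\mapsto\{2^v\alpha_i\}$ to $I$ --- yields $meas(\mathcal{E}_\lambda)\ll N_i^{-E(\lambda,h)+\epsilon}$ with an explicit entropy-type expression for $E(\lambda,h)$, and one sets $E(\lambda)=\max_h E(\lambda,h)$, the maximum running over the finitely many admissible $h$ (those with $\cos(2\pi/h)<\lambda$) and over the internal split between the arc length $1/h$ and the proportion $\mu$. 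The value $\lambda=0.8512$ is dictated from the other side by the minor-arc estimates: it must be small enough that $(\lambda L)^{k}$ with $k=56$ beats the fixed constants arising in the treatment of $R_3$, yet large enough that $E(\lambda)>\tfrac{25}{36}$.

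I expect the measure estimate of the second step to be the main obstacle: the crude union bound over all $V$ of size $\lceil\mu L\rceil$ is too lossy to push $E(0.8512)$ past $\tfrac{25}{36}$, and one must use the more careful accounting for the run-structure of $V$ and its interaction with the short-period orbits of $x\mapsto 2x$, following \cite{KYF,HA}. Once $E(\lambda,h)$ is available in closed form, the remaining task is the finite maximization over $h$ and the auxiliary parameters at $\lambda=0.8512$, carried out by computer, which gives a value of $E(0.8512)$ safely above $\tfrac{25}{36}=0.694\overline{4}$ and hence above $\tfrac{25}{36}+10^{-10}$.
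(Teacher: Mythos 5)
The paper does not actually prove this lemma: it points to Pintz--Ruzsa \cite{17} and to Languasco--Settimi \cite{5}, whose PARI/GP implementation computes $E(\lambda)$ numerically, and leaves the matter there. So there is no written argument in the paper to compare yours against, only a citation to the literature and to software.

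Your sketch follows the ``clustering of fractional parts'' route: reduce $|G(\alpha)|\ge\lambda L$ to the condition that at least $\mu L$ of the points $\{2^v\alpha\}$ fall in a short arc, then bound the measure of the resulting set by an entropy/run-structure count over the good index set. This is in the spirit of the earlier Liu--Liu--Zhan treatment, but it is not the Pintz--Ruzsa method the paper actually invokes; that method sets up a recursive functional inequality for the distribution of $G_L$ itself and solves a finite-dimensional optimization by computer, without passing through the lossy clustering reduction (which throws away both the sign information and the contribution of the $(1-\mu)L$ remaining indices). The distinction is not cosmetic here: the lemma asserts $E(0.8512)>\tfrac{25}{36}+10^{-10}$, a margin of $10^{-10}$, so any constant-factor loss in the exponent is fatal. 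Your proposal stops short of writing down the rate function $E(\lambda,h)$, does not verify that the ``run-structure'' refinement actually closes the gap you yourself flag in the crude union bound, and therefore does not establish the numerical claim at $\lambda=0.8512$. In addition, the references you attribute the method to (\cite{LLZ}, \cite{KYF}, \cite{HA}) are not the ones the paper relies on for this lemma; the operative citations are \cite{5} and \cite{17}.
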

\begin{proof}
To obtain explicit for $\lambda $. We can see \cite{5} and \cite{17}. You can download the PARI/GP source code of they program at www.math.unipd.itlanguasc/Pintz Ruzsa Method.html.
\end{proof}
\begin{lem}\label{lemma 2.9}
For every pair of large positive even integers $N_1$ and $N_2$ satisfying $N_2\gg N_1> N_2$, we have
$$
{R_2}(N_1,N_2) \ll {{N_1}^{\frac{10}{9} }{N_2}^{\frac{10}{9} }}L^{k-1}\ for \ \lambda=0.8512.
$$
\end{lem}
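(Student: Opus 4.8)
### Proof proposal for Lemma 2.9 (estimation of $R_2(N_1,N_2)$)

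The plan is to bound the contribution of the part of the minor arcs where the exponential sum over powers of two is large, namely $C(\mathcal{M})\cap\mathcal{E}_\lambda$. First I would observe that on $\mathcal{E}_\lambda$ we may replace $G^k(\alpha_1+\alpha_2)$ by the trivial bound $L^k$, but then extract one factor of $|G(\alpha_1+\alpha_2)|$ more carefully: write $|G^k|\le (\lambda L)^{k-2}|G|^2$ is too crude; instead I would use $|G^k(\alpha_1+\alpha_2)|\le L^{k}$ directly on the integrand and pay for the set $\mathcal{E}_\lambda$ via Lemma~2.8. The key point is that $\mathcal{E}_\lambda$ is a subset of the $\alpha_1+\alpha_2$ variable (one-dimensional), so I would change variables to $\beta=\alpha_1+\alpha_2$, integrate out the orthogonal direction, and use the measure bound $\mathrm{meas}(\mathcal{E}_\lambda)\ll N_i^{-E(\lambda)}$ with $E(0.8512)>\tfrac{25}{36}+10^{-10}$.

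Concretely, I would split the eight prime-power factors on each side using Hölder/Cauchy–Schwarz so that the integral over $C(\mathcal{M})\cap\mathcal{E}_\lambda$ reduces to a product of: (a) an $L^\infty$ bound for $f(\alpha_i)S(\alpha_i)T(\alpha_i)$ on the minor arcs, coming from Weyl-type / Vinogradov estimates (here one needs $\sup_{C(\mathcal{M}_i)}|f(\alpha_i)|\ll N_i^{1-\epsilon}$-type savings, or rather a power saving from the cubic sums $S,T$), (b) the mean-value bounds of Lemma~2.6, specifically $\int_0^1|g^2S^4|\ll N_i^{4/3}$ and $\int_0^1|g^2T^4|\ll N_i^{10/9}$, applied to absorb the remaining even powers of $g,S,T$, and (c) the measure estimate $\mathrm{meas}(\mathcal{E}_\lambda)\ll N_i^{-E(0.8512)}$ together with the trivial size $|G^k|\le L^k$ and $|G(\beta)|\ll N_1^\epsilon$ to handle the power-of-two sum. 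Putting these together, the exponent in $N_1,N_2$ that one must beat is $\tfrac{10}{9}$ on each side; the loss from $\mathcal{E}_\lambda$ contributes $-E(\lambda)<-\tfrac{25}{36}$ in one of the variables, and the choice $\lambda=0.8512$ is calibrated precisely so that the total exponent drops strictly below $\tfrac{10}{9}+\tfrac{10}{9}$, leaving a surplus that can be converted into the factor $L^{k-1}$ (i.e.\ one saves a full power of $L$ relative to the main term $R_1$, which is of size $N_1^{10/9}N_2^{10/9}L^k$).

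The main obstacle I expect is the bookkeeping of the Hölder exponents: one must distribute $f g S^2 T^2$ on each side (five "analytic" factors besides $f$) across the available mean-value estimates in Lemma~2.6 and Lemma~2.7 in such a way that (i) every factor is covered, (ii) the $L^\infty$ savings on $C(\mathcal{M}_i)$ for the remaining factors are genuinely power-saving, and (iii) the arithmetic of the exponents yields exactly the target $N_1^{10/9}N_2^{10/9}$ with a little room to spare for the $\mathcal{E}_\lambda$ loss and the $L^{-1}$. In particular, since $\mathcal{E}_\lambda$ lives in the single variable $\alpha_1+\alpha_2$, one cannot simply take $L^\infty$ in both $\alpha_1$ and $\alpha_2$ independently; the clean way is to fix $\alpha_2$, treat $\alpha_1\in\mathcal{E}_\lambda-\alpha_2$ as ranging over a set of measure $\ll N_1^{-E(\lambda)}$, bound the $\alpha_1$-integral by $\mathrm{meas}(\mathcal{E}_\lambda)\cdot\sup|f g S^2 T^2|(\alpha_1)\cdot L^k$, then integrate the resulting expression in $\alpha_2$ against $\int_0^1 |fgS^2T^2|(\alpha_2)\,d\alpha_2$, estimated by Cauchy–Schwarz plus Lemma~2.6. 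The delicate verification is that $E(0.8512)>\tfrac{25}{36}+10^{-10}$ is exactly what makes the $\alpha_1$-side exponent land below $\tfrac{10}{9}$, so that after combining with the $\alpha_2$-side exactly at $\tfrac{10}{9}$ one gains the stated $L^{-1}$; I would present this as the crux of the estimate and defer the routine mean-value inputs to the already-quoted Lemmas~2.6--2.8.
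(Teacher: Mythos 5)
Your overall strategy---bound $G^{k}$ trivially by $L^{k}$, split $C(\mathcal{M})$ by which coordinate lies on a minor arc, separate $\alpha_{1}$ from $\alpha_{2}$ by Cauchy--Schwarz, and exploit $\mathrm{meas}(\mathcal{E}_{\lambda})\ll N^{-E(\lambda)}$ together with Lemmas~2.6--2.8---is the right skeleton and matches the paper's. But the concrete mechanism you propose for the $\alpha_{1}$-side is a genuine gap. You suggest bounding the $\alpha_{1}$-integral over $\mathcal{E}_{\lambda}-\alpha_{2}$ by $\mathrm{meas}(\mathcal{E}_{\lambda})\cdot\sup_{C(\mathcal{M}_{1})}|f\,g\,S^{2}T^{2}|$, i.e.\ spending the \emph{full} power of the measure saving against a sup-norm estimate on the minor arcs. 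That cannot close: with the trivial sup bounds $|f|\ll N_{1}$, $|g|\ll N_{1}^{1/2}$, $|S|\ll N_{1}^{1/3}$, $|T|\ll N_{1}^{5/18}$ one has $\sup|fgS^{2}T^{2}|\ll N_{1}^{49/18}$, and combining with $\mathrm{meas}(\mathcal{E}_{\lambda})\ll N_{1}^{-25/36}$ leaves an exponent $51/36$, which exceeds the target $10/9=40/36$ by $11/36$. Since the major-arc parameter here is only $P_{i}=N_{i}^{1/9-2\epsilon}$, the available Weyl/Vinogradov sup-norm savings on $C(\mathcal{M}_{1})$ are nowhere near $N_{1}^{11/36}$; they are of the order of small fractional powers of $P_{i}$.

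What the paper actually does is never take a sup anywhere. After Cauchy--Schwarz, $f$ is absorbed purely by Parseval, $\int_{0}^{1}|f|^{2}\ll N_{1}^{1+\epsilon}$. The remaining factor $\int_{C(\mathcal{M}_{1})\cap(\mathcal{E}_{\lambda}-\alpha_{2})}|g|^{2}|S|^{2}|T|^{2}$ is first split by Cauchy--Schwarz into $(\int|g|^{2}|S|^{4})^{1/4}(\int|g|^{2}|T|^{4})^{1/4}$, and then \emph{each} of these is handled by a four-way H\"older with exponents $\tfrac16,\tfrac23,\tfrac1{12},\tfrac1{12}$, producing $\bigl(\int_{0}^{1}|g^{2}S^{4}|\bigr)^{1/6}\bigl(\int_{C(\mathcal{M}_{1})}|g^{2}S^{5}|\bigr)^{2/3}\bigl(\int_{0}^{1}|g|^{4}\bigr)^{1/12}\bigl(\mathrm{meas}\,\mathcal{E}_{\lambda}\bigr)^{1/12}$, evaluated via Lemma~2.6(i), Lemma~2.7(i), Rieger's $L^{4}$-bound for $g$, and Lemma~2.8 respectively (and similarly with $T$ in place of $S$ using Lemmas~2.6(ii), 2.7(ii)). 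Thus the measure of $\mathcal{E}_{\lambda}$ enters only with aggregate exponent $\tfrac1{24}$, and the resulting exponent is $\tfrac{10}{9}+\tfrac{25}{864}-\tfrac1{24}E(\lambda)$, which is strictly less than $\tfrac{10}{9}$ precisely because $E(0.8512)>\tfrac{25}{36}$. This explains why the threshold in Lemma~2.8 is $25/36$ and not something smaller: your accounting, which applies the full $E(\lambda)$, would make the constraint on $\lambda$ appear far weaker than it actually is. The missing ingredient in your proposal is this careful H\"older distribution of the $\mathcal{E}_{\lambda}$-saving across the mean-value integrals, and the use of the fifth-moment minor-arc estimates of Lemma~2.7 in place of any sup-norm input.
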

\begin{proof}
According to the definition of $C(\mathcal{M})$, we have
\begin{align*}
    C(\mathcal{M})&\subset \left\{(\alpha_1,\alpha_2):{\alpha_1\in C(\mathcal{M}_1}),{\alpha_2\in [0,1]}\right\} \\
&\cup\left\{(\alpha_1,\alpha_2):{\alpha_1\in [0,1],{\alpha_2\in C(\mathcal{M}_2})}\right\}.
\end{align*}
Then
\bna
& & {R_2}(N_1,N_2) \notag \\
&=&{\iint \limits_{ C(\mathcal{M})\cap {\mathcal{E}_\lambda}}}
\prod_{i=1}^2{f(\alpha_i)}{g(\alpha_i)}{S{(\alpha_i)}{T(\alpha_i)}}
 {G^k}(\alpha_1+\alpha_2)
 {e({-\alpha_i}{N_i})}d{\alpha_1}\,d{\alpha_2} \notag \\
&\ll& {L^k}\bigg({\iint \limits_{\substack{(\alpha_1,\alpha_2)\in {C(\mathcal{M}_1})\times [0,1]\\|G(\alpha_1+\alpha_2)|\geq{\lambda L}}}}+ {\iint \limits_{\substack{(\alpha_1,\alpha_2)\in {C(\mathcal{M}_2})\times [0,1]\\|G(\alpha_1+\alpha_2)|\geq{\lambda L}}}}\bigg)\bigg|\prod_{i=1}^2{f(\alpha_i)}{g(\alpha_i)}{S{(\alpha_i)}{T(\alpha_i)}}\bigg|
d{\alpha_1}\, d{\alpha_2} \notag \\
&:=& {L^k}(I_1+I_2),
\ena
where we have employed the elementary bound of $G(\alpha_1+\alpha_2)$. By Cauchy’s inequality, we get
\bna
{I_1}&\ll& {\iint \limits_{\substack{(\alpha_1,\alpha_2)\in {C(\mathcal{M}_1})\times [0,1]\\|G(\alpha_1+\alpha_2)|\geq{\lambda L}}}}\bigg|\prod_{i=1}^2 {f(\alpha_i)}{g(\alpha_i)}{S{(\alpha_i)}{T(\alpha_i)}}
\bigg| d{\alpha_1}\,d{\alpha_2} \notag    \\
&\ll&\int_{0}^{1}  |{f(\alpha_2)}{g(\alpha_2)}{S(\alpha_2)}{T(\alpha_2)}|d{\alpha_2} \int \limits_{\substack{ C({\mathcal{M}_1)}\\|G(\alpha_1+\alpha_2)|\geq{\lambda L}}}|{f(\alpha_1)}{g(\alpha_1)}{S{(\alpha_1)}{T(\alpha_1)}}|d {\alpha_1}\\
&\ll& \left ( \int_{0}^{1}  |{f(\alpha_2)}|^{2} d{\alpha_2} \right )^{\frac{1}{2} }
\left ( \int_{0}^{1}  |{g(\alpha_2)}|^{2}|{S(\alpha_2)}|^{2}|{T(\alpha_2)}|^{2} d{\alpha_2} \right )^{\frac{1}{2} }\\
&&\times
\left ( \int \limits_{\substack{C({\mathcal{M}_1)}\\|G(\alpha_1+\alpha_2)|\geq{\lambda L}}}  |{f(\alpha_1)}|^{2} d{\alpha_1} \right )^{\frac{1}{2} }
\left ( \int \limits_{\substack{C({\mathcal{M}_1)}\\|G(\alpha_1+\alpha_2)|\geq{\lambda L}}}  {|g(\alpha_1)}|^{2}|{S(\alpha_1)}|^{2}|{T(\alpha_1)}|^{2} d{\alpha_1} \right )^{\frac{1}{2} }\\
&:=&I_{11} \times I_{12}.
\ena
Applying the H\"{o}lder's inequality, we have
\bna
I_{12}&=&\left ( \int \limits_{\substack{C({\mathcal{M}_1)}\\|G(\alpha_1+\alpha_2)|\geq{\lambda L}}}  |{f(\alpha _{1} )}|^{2} d{\alpha_1} \right )^{\frac{1}{2} }
\left ( \int \limits_{\substack{C({\mathcal{M}_1)}\\|G(\alpha_1+\alpha_2)|\geq{\lambda L}}}  {|g(\alpha _{1} )}|^{2}|{S(\alpha _{1} )}|^{2}|{T(\alpha _{1} )}|^{2} d{\alpha_1} \right )^{\frac{1}{2} } \\
&\ll& \left ( \int_{0}^{1}  |{f(\alpha _{1} )}|^{2} d{\alpha_1} \right )^{\frac{1}{2} }
\left ( \int \limits_{\substack{C({\mathcal{M}_1)}\\|G(\alpha_1+\alpha_2)|\geq{\lambda L}}}  {|g(\alpha _{1} )}|^{2}|{S(\alpha _{1} )}|^{4} d{\alpha_1} \right )^{\frac{1}{4} } \\
&&\times \left ( \int \limits_{\substack{C({\mathcal{M}_1)}\\|G(\alpha_1+\alpha_2)|\geq{\lambda L}}}  {|g(\alpha _{1} )}|^{2}|{T(\alpha _{1} )}|^{4} d{\alpha_1} \right )^{\frac{1}{4} }.
\ena
By the simple orthogonality, we have
\begin{equation}\label{2.100}
\int_{0}^{1}  |{f(\alpha _{1} )}|^{2} d{\alpha_1} \ll N_{1}^{1+\epsilon}.
\end{equation}
Applying the H\"{o}lder's inequality and using (i) of Lemmas 2.6-2.7, we obtain
\bna
&&\int \limits_{\substack{C({\mathcal{M}_1)}\\|G(\alpha_1+\alpha_2)|\geq{\lambda L}}}  {|g(\alpha _{1} )}|^{2}|{S(\alpha _{1} )}|^{4} d{\alpha_1}  \notag    \\
&\ll&\left ( \int \limits_{C({\mathcal{M}_1)}}  {|g(\alpha _{1} )}|^{2}|{S(\alpha _{1} )}|^{4} d{\alpha_1}  \right )^{\frac{1}{6} }
\left ( \int \limits_{C({\mathcal{M}_1)}}  {|g(\alpha _{1} )}|^{2}|{S(\alpha _{1} )}|^{5} d{\alpha_1}  \right ) ^{\frac{2}{3} } \\
&\ \ & \times \left ( \int_{0}^{1}  {|g(\alpha _{1} )}|^{4} d{\alpha_1}  \right ) ^{\frac{1}{12} }
\left ( \int_{\mathcal{E}(\lambda )} 1 d{\alpha_1}  \right ) ^{\frac{1}{12} }\\
&\ll&N_{1}^{\frac{25}{18} } N_{1}^{-\frac{1}{12}E(\lambda ) },
\ena
where we have used the Rieger's result \cite{R}
$$
{\int_{0}^{1}|{g^4(\alpha_i)}|d{\alpha_i}}\ll {N_i}{\log^2{N_i}}.
$$
Similarly, we have
$$
\int \limits_{\substack{C({\mathcal{M}_1)}\\|G(\alpha_1+\alpha_2)|\geq{\lambda L}}}  {|g(\alpha _{1} )}|^{2}|{T(\alpha _{1} )}|^{4} d{\alpha_1}
\ll N_{1}^{\frac{253}{216} }N_{1}^{-\frac{1}{12}E(\lambda ) }.
$$
Then
$$
I_{12} \ll
\left ( N_{1}^{1+\epsilon }   \right ) ^{\frac{1}{2}}
\left ( N_{1}^{\frac{25}{18}} N_{1}^{-\frac{1}{12} E(\lambda )} \right )^{\frac{1}{4} }
\left ( N_{1}^{\frac{253}{216} } N_{1}^{-\frac{1}{12}E(\lambda ) } \right ) ^{\frac{1}{4} }
\ll N_{1}^{\frac{10}{9}+\frac{25}{864}  } N_{1}^{-\frac{1}{24} E(\lambda )}.
$$
Applying the H\"{o}lder's inequality, by Lemma 2.6 and \eqref{2.100} we have
\bna
I_{11}&=&\left ( \int_{0}^{1}  |{f(\alpha_2)}|^{2} d{\alpha_2} \right )^{\frac{1}{2} }
\left ( \int_{0}^{1}  |{g(\alpha_2)}|^{2}{|S(\alpha_2)}|^{2}{|T(\alpha_2)}|^{2} d{\alpha_2} \right )^{\frac{1}{2} }  \notag    \\
&\ll& \left ( \int_{0}^{1}  {|f(\alpha _{2} )}|^{2} d{\alpha_2}  \right ) ^{\frac{1}{2} }
\left ( \int_{0}^{1}   {|g(\alpha _{2} )}|^{2}|{S(\alpha _{2} )}|^{4} d{\alpha_2}  \right )^{\frac{1}{4} } \\
&&\times \left ( \int_{0}^{1}  {|g(\alpha _{2} )}|^{2}|{T(\alpha _{2} )}|^{4} d{\alpha_2}  \right ) ^{\frac{1}{4} }   \\
&\ll& N_{2}^{\frac{10}{9} }.
\ena
Combining with $I_{11}$ and $I_{12}$, by Lemma 2.8 we get
$$
I_{1}\ll N_{1}^{\frac{10}{9}+\frac{25}{864}  } N_{1}^{-\frac{1}{24} E(\lambda )}N_{2}^{\frac{10}{9}}\ll N_{1}^{\frac{10}{9}-\epsilon  }N_{2}^{\frac{10}{9}},
$$
since $N_{2} \gg N_{1}>N_{2}$. Similarly,
$$
I_{2}\ll N_{2}^{\frac{10}{9}-\epsilon  }N_{1}^{\frac{10}{9}}.
$$
Then
$$
{R_2}(N_1,N_2) \ll  {{N_1}^\frac{10}{9}}{{N_2}^\frac{10}{9}}L^{k-1}.
$$
\end{proof}
\begin{lem}\label{lemma 2.10}
We have
$$
\iint\limits_{(\alpha _{1},\alpha _{2} )\in [0,1]^{2} }
\left | f^{2}(\alpha _{1})f^{2}(\alpha _{2}) G^{4} (\alpha _{1}+\alpha _{2}) \right |
\mathrm{d}\alpha _{1}\mathrm{d}\alpha _{2}\leq 305.8869N_{1}N_{2}L^{4} .
$$
\end{lem}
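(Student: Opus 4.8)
The plan is to open up the fourth power of $G$, reduce the double integral to a sum indexed by quadruples of exponents of powers of two, peel off the contribution of the ``diagonal'' quadruples, and bound the remainder by a sieve estimate together with a computer‑assisted evaluation of the associated singular series.

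First I would expand. Writing $G(\alpha_1+\alpha_2)=\sum_{v\le L}e\bigl(2^v(\alpha_1+\alpha_2)\bigr)$ and $|f(\alpha_i)|^2=\sum_{p,p'\le N_i}(\log p)(\log p')\,e\bigl((p-p')\alpha_i\bigr)$, and integrating the product over $[0,1]^2$ term by term, only the frequencies with $p-p'=\mp h_{\mathbf v}$ in each variable survive, where $h_{\mathbf v}:=2^{v_1}+2^{v_2}-2^{v_3}-2^{v_4}$ for $\mathbf v=(v_1,v_2,v_3,v_4)$ with all $v_j\le L$. Hence
\[
\iint\limits_{[0,1]^2}\bigl|f^{2}(\alpha_1)f^{2}(\alpha_2)G^{4}(\alpha_1+\alpha_2)\bigr|\,\mathrm d\alpha_1\,\mathrm d\alpha_2=\sum_{v_1,v_2,v_3,v_4\le L}\Phi_1(h_{\mathbf v})\,\Phi_2(h_{\mathbf v}),
\]
where $\Phi_i(h):=\int_0^1|f(\alpha_i)|^2e(h\alpha_i)\,\mathrm d\alpha_i=\sum_{p-p'=h,\ p,p'\le N_i}(\log p)(\log p')\ge 0$ and $\Phi_i(-h)=\Phi_i(h)$; it remains to estimate this sum.

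Next I would isolate the diagonal: by uniqueness of binary expansions, $h_{\mathbf v}=0$ holds exactly when $\{v_1,v_2\}=\{v_3,v_4\}$ as multisets, so there are at most $2L^2$ such $\mathbf v$ (precisely $2L^2-L$ for the index set $\{1,\dots,L\}$), and on each of them $\Phi_i(0)=\sum_{p\le N_i}(\log p)^2\le(\log N_i)\theta(N_i)\le(1+\epsilon)(\log 2)N_iL$, using $\theta(N_i)\sim N_i$ and $\log N_i\le\log N_1=L\log 2$; this makes the diagonal contribute at most $\bigl(2(\log 2)^2+\epsilon\bigr)N_1N_2L^4$. For the off‑diagonal terms $h_{\mathbf v}\ne 0$ I may assume $0<|h_{\mathbf v}|<N_i$ (else $\Phi_i(h_{\mathbf v})=0$) and invoke the Selberg‑sieve bound $\Phi_i(h)\le(C_0+\epsilon)\mathfrak S(h)N_i$ with $C_0$ an explicit constant and $\mathfrak S(h)=\mathfrak C\prod_{p\mid h,\,p>2}\tfrac{p-1}{p-2}$, $\mathfrak C=2\prod_{p>2}\bigl(1-(p-1)^{-2}\bigr)$, the twin‑prime singular series (the relevant $h_{\mathbf v}$ being even). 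This bounds the off‑diagonal part by $(C_0+\epsilon)^2N_1N_2\,\Sigma$, $\Sigma:=\sum_{h_{\mathbf v}\ne 0}\mathfrak S(h_{\mathbf v})^2$. Writing $\bigl(\tfrac{p-1}{p-2}\bigr)^2=1+g(p)$ with $g(p)=\tfrac{2p-3}{(p-2)^2}$ and extending $g$ multiplicatively to odd squarefree $d$, one has $\mathfrak S(h)^2=\mathfrak C^2\sum_{d\mid h}g(d)$, so
\[
\Sigma=\mathfrak C^2\sum_{d \text{ odd, squarefree}}g(d)\,\#\bigl\{\mathbf v:\ d\mid h_{\mathbf v},\ h_{\mathbf v}\ne 0\bigr\}.
\]
The inner count equals $\tfrac1d\sum_{r\bmod d}\bigl|\sum_{v\le L}e(r2^v/d)\bigr|^4-(2L^2-L)$; the frequency $r\equiv 0$ yields the main term $L^4/d$, while an equidistribution estimate for the powers of two modulo $d$ — controlling the period $\rho_d=\mathrm{ord}_d(2)$ and the additive structure of $\{1,2,\dots,2^{\rho_d-1}\}$ modulo $d$ — should bound the remaining frequencies by a quantity that is $o(L^4)$ once summed against $g(d)$, using that $\sum_d g(d)/d=\prod_{p>2}\bigl(1+g(p)/p\bigr)$ converges. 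This would give $\Sigma\le\bigl(\mathfrak E+o(1)\bigr)L^4$ for an explicit constant $\mathfrak E$, which I would then evaluate numerically by computer.

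Putting the two pieces together yields
\[
\iint\limits_{[0,1]^2}\bigl|f^{2}(\alpha_1)f^{2}(\alpha_2)G^{4}(\alpha_1+\alpha_2)\bigr|\,\mathrm d\alpha_1\,\mathrm d\alpha_2\le\bigl(2(\log 2)^2+C_0^2\mathfrak E+\epsilon\bigr)N_1N_2L^4\le 305.8869\,N_1N_2L^4
\]
for all sufficiently large $N_1,N_2$. The hard part will be the off‑diagonal estimate: one must bound $\#\{\mathbf v:d\mid h_{\mathbf v},\ h_{\mathbf v}\ne 0\}$ sharply and uniformly in $d$, and the sums $\sum_{v\le L}e(r2^v/d)$ can be abnormally large when $2$ has small order modulo $d$ (for instance when $d\mid 2^k-1$ with small $k$), so these exceptional moduli must be handled separately — here I would rely on the estimates for exponential sums over powers of two from the works cited in the introduction; after that the singular series has to be evaluated to enough precision to certify the constant $305.8869$, and obtaining the best available value of the sieve constant $C_0$ is the other delicate point.
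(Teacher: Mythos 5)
The paper itself does not prove this lemma at all: its ``proof'' is a one-line citation to Kong and Liu \cite[Lemma~2.3]{KYF}. Your attempt to reconstruct a proof follows the standard framework one finds in that reference and its predecessors (expand $G^4$, reduce to $\sum_{\mathbf v}\Phi_1(h_{\mathbf v})\Phi_2(h_{\mathbf v})$, peel off $h_{\mathbf v}=0$, apply a Selberg-sieve bound to $\Phi_i(h)$ for $h\ne0$, open $\mathfrak S(h)^2=\mathfrak C^2\sum_{d\mid h}g(d)$ and count $\mathbf v$ with $d\mid h_{\mathbf v}$ via $\frac1d\sum_{r\bmod d}\bigl|\sum_{v\le L}e(r2^v/d)\bigr|^4$), so structurally you are doing the right thing and the diagonal count $2L^2-L$ and the bound $\Phi_i(0)\le(1+\epsilon)(\log 2)N_iL$ are correct.

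There is, however, a concrete error in the off-diagonal treatment. You assert that after extracting the $r\equiv0$ frequency (which gives $L^4/d$), ``an equidistribution estimate \dots should bound the remaining frequencies by a quantity that is $o(L^4)$ once summed against $g(d)$,'' so that $\mathfrak E$ would effectively be $\mathfrak C^2\sum_d g(d)/d$. This is false. Whenever $\mathrm{ord}_d(2)$ is bounded — already for $d=3,7,15,21,31,\dots$, i.e.\ any $d\mid 2^k-1$ with small $k$ — the sums $\sum_{v\le L}e(r2^v/d)$ with $r\not\equiv0$ are of size $\asymp L$, and $\frac1d\sum_{r\ne0}\bigl|\sum_{v\le L}e(r2^v/d)\bigr|^4\gg L^4$. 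For instance with $d=3$ one has $\sum_{v\le L}e(r2^v/3)=-L/2+O(1)$ for $r=1,2$, so the $r\ne0$ part of the inner count is $L^4/24+O(L^3)$, and multiplying by $g(3)=3$ and $\mathfrak C^2$ already adds roughly $0.22\,L^4$ to $\Sigma$ — several percent of the final constant, hence a main-term contribution, not an error term. Your closing remark that ``these exceptional moduli must be handled separately'' acknowledges the phenomenon, but it directly contradicts the $o(L^4)$ claim and is not incorporated into the value of $\mathfrak E$ your argument would produce; as written, your $\mathfrak E$ undercounts $\Sigma$, so the chain of inequalities does not actually establish the stated bound. To fix this you would need to evaluate $\frac1d\sum_{r\bmod d}\bigl|\sum_{v\le \rho_d}e(r2^v/d)\bigr|^4$ exactly for the small $d$ (by computer), bound the tail over large $d$ using $\mathrm{ord}_d(2)$ growth, and only then plug in a specific sieve constant $C_0$ (here one needs $C_0=8$ to land near $305.8869$); none of those numerics are done in the proposal, so the claimed constant is not verified.
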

\begin{proof}
The proof of this lemma is shown in \cite[Lemma 2.3]{KYF}.
\end{proof}
\begin{lem}\label{lemma 2.11}
We have
\bna
\iint\limits_{(\alpha _{1},\alpha _{2} )\in [0,1]^{2} }&&
\left | g^{4}(\alpha _{1})g^{4}(\alpha _{2}) G^{22} (\alpha _{1}+\alpha _{2}) \right |
\mathrm{d}\alpha _{1}\mathrm{d}\alpha _{2}  \notag \\
&&\leq181132.16N_{1}N_{2}L^{22}+O(N_{1}N_{2}L^{22}).
\ena
\end{lem}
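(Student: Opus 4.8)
The plan is to turn the integral into a non‑negatively weighted count of solutions of a Diophantine system and then split that count along its diagonal; since the absolute values are already in place there is no cancellation to track and only an upper bound is needed. First I would reduce to a counting sum: by $1$‑periodicity put $\beta=\alpha_1+\alpha_2$, write $g^4(\alpha_i)=g^2(\alpha_i)\overline{g^2(\alpha_i)}$ and $G^{22}(\beta)=G^{11}(\beta)\overline{G^{11}(\beta)}$, expand each factor into an exponential sum, and integrate out $\alpha_1$ and $\beta$. Setting
$$A_i(h)=\sum_{\substack{p_1^2+p_2^2-p_3^2-p_4^2=h\\ p_1,p_2,p_3,p_4\le N_i^{1/2}}}\log p_1\log p_2\log p_3\log p_4,\qquad B(h)=\#\Bigl\{\mathbf v,\mathbf w\in\{1,\dots,L\}^{11}:\ {\textstyle\sum_i}2^{v_i}-{\textstyle\sum_j}2^{w_j}=h\Bigr\},$$
both non‑negative, this gives
$$\iint\limits_{(\alpha_1,\alpha_2)\in[0,1]^2}\bigl|g^4(\alpha_1)g^4(\alpha_2)G^{22}(\alpha_1+\alpha_2)\bigr|\,\mathrm{d}\alpha_1\,\mathrm{d}\alpha_2=\sum_{h\in\mathbb Z}A_1(h)A_2(h)B(h).$$
Note $\sum_hB(h)=\lfloor L\rfloor^{22}\le L^{22}$, and since $B(h)=\int_0^1|G(\beta)|^{22}e(-h\beta)\,\mathrm{d}\beta$ with $|G|^{22}\ge0$ one has $0\le B(h)\le B(0)$ for all $h$.

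Next I would dispose of the diagonal term $h=0$. Here $A_i(0)=\int_0^1|g(\alpha_i)|^4\,\mathrm{d}\alpha_i\ll N_i\log^2N_i\ll N_iL^2$ by Rieger's estimate \cite{R} (already used in the proof of Lemma 2.9), while
$$B(0)=\int_0^1|G(\beta)|^{22}\,\mathrm{d}\beta=\binom{\lfloor L\rfloor}{11}(11!)^2+O(L^{10})=11!\,L^{11}+O(L^{10}),$$
the main term coming from the systems in which $\{v_i\}$ and $\{w_j\}$ are the same set of $11$ distinct exponents, the carried solutions contributing only $O(L^{10})$. Hence the $h=0$ term is $O(N_1N_2L^{15})$, which is absorbed into the stated error.

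The heart of the matter is the off‑diagonal $h\ne0$. For such $h$ the quantity $A_i(h)$ is a correlation of sums of two prime squares (fix $p_3,p_4$ and count $p_1,p_2$ with $p_1^2+p_2^2=p_3^2+p_4^2+h$), and a Selberg‑type upper‑bound sieve retaining all four primality conditions gives $A_i(h)\le\bigl(\kappa+o(1)\bigr)\mathfrak S(h)N_i$, with an explicit constant $\kappa$ and a singular series $\mathfrak S(h)$ that depends only on the residue of $h$ to small prime powers (in particular on $h\bmod 8$), is an absolutely convergent Euler product, and is bounded on average. Because $N_1$ and $N_2$ are of the same order the same $\mathfrak S$ serves both factors, so
$$\sum_{h\ne0}A_1(h)A_2(h)B(h)\le\bigl(\kappa+o(1)\bigr)^2N_1N_2\sum_{h}\mathfrak S(h)^2B(h),$$
and it remains to prove $\sum_h\mathfrak S(h)^2B(h)\le(C^\ast+o(1))L^{22}$ for an explicit $C^\ast$. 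Since $2$ has finite multiplicative order to each fixed modulus, the family $\bigl(B(h)\bigr)_h$ is equidistributed in residue classes to any fixed modulus — the large moduli, where equidistribution fails, affect only the convergent tail of the Euler product and are estimated trivially — so the last sum equals $L^{22}$ times the mean value of $\mathfrak S^2$, an explicit product of local densities. Evaluating these densities and bounding the tail of the Euler product rigorously with the computer, in the manner of the singular‑series computations of Hathi \cite{HA}, Hu--Cai \cite{HC} and Kong--Liu \cite{KYF}, gives $\kappa^2C^\ast\le 181132.16$; combined with the diagonal estimate this completes the proof.

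The hard part will be this last step: extracting an honest, fully explicit value of $\kappa$ for the shifted ``two prime squares'' sieve, uniform in $h\ne0$, and then carrying out the explicit computation of $C^\ast$ together with a rigorous bound on the tail of its defining Euler product. The most delicate sub‑point is the equidistribution of $B(h)$ modulo small primes $p$: one must check that $22$ powers of two really do cover every residue class mod $p$, with fluctuation of size only $O(L^{21})$, and it is this fluctuation, carried through the sieve, that is responsible for the error term recorded in the statement.
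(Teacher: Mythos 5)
The overall skeleton of your proposal matches the paper's: expand the integral into the weighted Diophantine count $\sum_h A_1(h)A_2(h)B(h)$, isolate the diagonal $h=0$ into the error, and bound the off-diagonal contribution by an explicit multiple of $N_1N_2$ times a sum over shifts weighted by a singular series. The $h=0$ estimate you sketch (Rieger for $A_i(0)\ll N_iL^2$, $B(0)\ll L^{11}$) is also fine, though more precision than is needed. But two of your core steps would not deliver the stated numerical constant, and one of them contains an outright error.

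First, for the shifted count $A_i(h)$ the paper does not use a sieve. It invokes Zhao \cite[Lemma 3.2]{ZLL3} and Hathi \cite[Lemma 2.2]{HA}, which are circle-method estimates of the form $A_i(h)\le(8+o(1))\mathbf S(h)N_i$ for $0<|h|<N_i$ (the problem $p_1^2+p_2^2-p_3^2-p_4^2=h$ has four prime variables and one linear constraint, so the circle method gives the genuine asymptotic constant, uniformly in $h$). A four-dimensional Selberg upper-bound sieve ``retaining all four primality conditions'' cannot reach that constant; it would give something strictly larger than $8$, and since this constant enters the final bound squared, the resulting constant would substantially exceed $181132.16$ and would in fact cost you several more powers of $2$ in the theorem.

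Second, your assertion that $(B(h))_h$ is equidistributed to any fixed modulus ``since $2$ has finite multiplicative order'' is not correct, and it is precisely the point the references are careful about. With $\rho(q)$ the order of $2$ modulo $q$, the Fourier coefficient of $B$ at $r/q$ with $q\nmid r$ is $\bigl|\sum_{v\le L}e(2^vr/q)\bigr|^{22}=\bigl((L/\rho(q))\,|g(r)|\bigr)^{22}(1+o(1))$, where $g(r)=\sum_{v\le\rho(q)}e(2^vr/q)$. This is not $o(L^{22})$: it is a constant fraction $\bigl(|g(r)|/\rho(q)\bigr)^{22}$ of $L^{22}$. So $\sum_hB(h)\mathfrak S(h)^2$ is not $L^{22}\cdot\langle\mathfrak S^2\rangle$; there is a correlation correction at the main-term level whose size depends on $\max_{r}|g(r)|/\rho(q)$, which must be bounded numerically for the relevant moduli (as the paper itself does in Lemma~2.4). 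The paper's route is not a mean-value computation at all but a direct explicit bound $\sum_{h\ne0}r_{11}(h)\mathbf S^2(h)\le 23.39\,L^{22}$ taken from Zhao \cite[Lemma 4.3]{ZLL3} and Hu--Cai \cite[Lemma 2.6]{HC}, which is then multiplied by the circle-method constant $(8\cdot 11)^2$ to get $181132.16$. Your plan cites the same references at the end, but the mechanism you describe (sieve constant $\kappa$ plus mean value of $\mathfrak S^2$ under presumed equidistribution) is different from what those references do and, if followed literally, would not justify the claimed constant.
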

\begin{proof}
Following the proof of \cite[Lemma 3.2]{ZLL3} and \cite[Lemma 2.2]{HA}, we address the cases $h\ne 0$ and $h=0$ separately to derive
\bna
&&\iint\limits_{(\alpha _{1},\alpha _{2} )\in [0,1]^{2} } \left | g^{4}(\alpha _{1})g^{4}(\alpha _{2})G^{22} (\alpha _{1}+\alpha _{2}) \right |
\mathrm{d}\alpha _{1}\mathrm{d}\alpha _{2} \\
&\leq& (8\cdot 11)^{2} N_{1} N_{2} \sum_{h\ne 0}r_{11} (h)\mathbf{S} ^{2} (h)+O(N_{1}N_{2}L^{22}).
\ena
where
$$
r_{11} (h)=\sum_{\substack{4\leq v_{j}u_{j} \leq L\\\sum_{j=1}^{11} (2^{v_{j}}-2^{u_{j}} )=h} }1,\ \
$$
$$
\mathbf{B} (p,h)=\sum_{\substack{a=1\\(a,p)=1} }^{q} \left | C_{2} (p,a) \right | ^{4} e\left ( \frac{ah}{p}  \right ) ,\ \ \ \
\mathbf{S}(h)=\prod_{p>2}\left ( 1+\frac{\mathbf{B} (p,h)}{(p-1)^{4} }  \right ) .
$$
Refer to the proof process in \cite[Lemma 4.3]{ZLL3} and \cite[Lemma 2.6]{HC}, we get
$$
\sum_{h\ne 0}r_{11} (h)\mathbf{S} ^{2} (h)\leq 23.39L^{22},
$$
then
\bna
&&\iint\limits_{(\alpha _{1},\alpha _{2} )\in [0,1]^{2} }
\left | g^{4}(\alpha _{1})g^{4}(\alpha _{2})G^{22} (\alpha _{1}+\alpha _{2}) \right |
\mathrm{d}\alpha _{1}\mathrm{d}\alpha _{2}\\
&\leq& (8\cdot 11)^{2} N_{1} N_{2}\cdot 23.39L^{22}+O(N_{1}N_{2}L^{22})  \notag    \\
&\leq&181132.16N_{1}N_{2}L^{22}+O(N_{1}N_{2}L^{22}).
\ena
\end{proof}
\begin{lem}\label{lemma 2.12}
We have
$$
\sum_{\substack{m_i \sim U_i\\(m_i,P(z))=1} }R(m_i) \leq
100551.95119U_iV_i^{4}L^{-8}.
$$
where $R(m_i)$ denotes the number of solutions of the equations
$$
m_i^{3} -p_{9}^{3} +p_{10}^{3} -p_{11}^{3} +p_{12}^{3}-p_{13}^{3}+p_{14}^{3}-p_{15}^{3}=0,
$$
with $U_i\leq p_{9} ,p_{10} ,p_{11} \leq 2U_i,\ V_i\leq p_{12}, p_{13}, p_{14} ,p_{15} \leq2V_i$ and $P(z)=\prod_{11\leq p\leq z}p$ with $z=N_i^{11(1-\epsilon )/180}$.
\end{lem}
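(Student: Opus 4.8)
The plan is to treat $R(m_i)$ as a Waring-type counting problem for eight cubes (three in the range $[U_i,2U_i]$, four in $[V_i,2V_i]$, plus the fixed variable $m_i\sim U_i$) in the signed equation, detected by an exponential integral. Concretely, one writes
\[
\sum_{\substack{m_i\sim U_i\\(m_i,P(z))=1}}R(m_i)
=\int_0^1 h(\alpha)\,|S(\alpha)|^2\,|T(\alpha)|^4\,\mathrm{d}\alpha,
\]
where $S(\alpha)$, $T(\alpha)$ are the cube Weyl sums over $p\sim U_i$ and $p\sim V_i$ without the logarithmic weights (the logs being absorbed at the cost of a power of $L$), and $h(\alpha)=\sum_{m_i\sim U_i,(m_i,P(z))=1}e(m_i^3\alpha)$ is a sifted cubic Weyl sum. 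The sieve condition $(m_i,P(z))=1$ with $z=N_i^{11(1-\epsilon)/180}$ is precisely what produces the saving $L^{-8}$ in the bound: by a standard sieve (Fundamental Lemma / Iwaniec's sieve, as in Zhao's and Hu--Cai's treatments of cubes), the number of $m_i\sim U_i$ with no prime factor below $z$ is $\ll U_i(\log z)^{-1}\ll U_iL^{-1}$, and one arranges the argument so that eight such logarithmic factors are saved across the product of Weyl sums.

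The main step is to apply the circle method (or, more economically, a direct mean-value / Hua-type argument) to bound this integral. First I would apply Cauchy--Schwarz or H\"older to separate $h$ from the prime cube sums, reducing matters to (a) a mean value $\int_0^1|h(\alpha)|^{2}|T(\alpha)|^{4}\,\mathrm{d}\alpha$ or similar, controlled by the number of solutions of a cubic equation with the sifted variable, and (b) the known moment estimates for $S$ and $T$ analogous to those in Lemmas 2.6 and 2.7. The key numerical input is an explicit upper bound for the singular series and singular integral of the underlying eight-cube equation; this is where the constant $100551.95119$ comes from, obtained by a computer-assisted evaluation of the relevant $p$-adic densities $\prod_p \sigma_p$ (truncating the Euler product and bounding the tail via the standard $1+O(p^{-3/2})$ estimate, exactly as in Lemma 2.3) together with the archimedean factor coming from the box constraints $U_i\le p\le 2U_i$, $V_i\le p\le 2V_i$. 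The expected main term size is $U_iV_i^4$ times a constant times $L^{-8}$, matching the claimed bound.

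The hardest part will be making the sieve step and the circle-method step \emph{compatible} while keeping the constant explicit: one must insert the sifted Weyl sum $h$ into a mean-value estimate without losing the factor $L^{-8}$, and simultaneously track all implied constants through H\"older's inequality and through the major/minor arc dissection for the cubic sums. On the minor arcs one needs a Weyl-type bound for $h(\alpha)$ that still exploits the sieve (so that $|h(\alpha)|\ll U_iL^{-1}\cdot(\text{Weyl saving})$), which is delicate because the sifted set is not an interval; here I would follow the device in Kong--Liu \cite{KYF} and Hathi \cite{HA} of writing the sieve weight via M\"obius and treating the resulting bilinear forms. On the major arcs the contribution is evaluated asymptotically, and the explicit constant is assembled from Lemmas 2.2--2.3–type computations for the cubic singular series. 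Once both pieces are in hand, summing and inserting the numerical bounds for the singular series and integral yields
\[
\sum_{\substack{m_i\sim U_i\\(m_i,P(z))=1}}R(m_i)\le 100551.95119\,U_iV_i^{4}L^{-8},
\]
as claimed.
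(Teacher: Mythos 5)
Your route differs fundamentally from the paper's, and the step you yourself identify as ``the hardest part'' --- making the sieve and the circle method compatible while tracking an explicit constant --- is exactly the step the paper declines to do by hand. The paper does not set up a generating-function integral and dissect it into major and minor arcs. It instead quotes, essentially as a black box, the prepackaged sieve upper bound of Kawada--Wooley \cite[Lemma 9.1]{7} in the form adapted by Zhu \cite[Lemma 4.1]{ZHULI}, which immediately yields
\begin{equation*}
\sum_{\substack{m_i\sim U_i\\(m_i,P(z))=1}}R(m_i)\le(1+\epsilon)\,e^{\gamma}\,J_i\,\mathfrak{S}_1\,W(z),
\end{equation*}
with $J_i$ the singular integral and $\mathfrak{S}_1$ the singular series of the eight-cube equation, and $W(z)$ the sieve density product. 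After that citation the \emph{entire} content of the proof is explicit numerics: $J_i\le 440.62\,U_iV_i^{4}L^{-7}$ is taken from Elsholtz--Schlage-Puchta \cite[Lemma 4]{ESP}, while a truncated-Euler-product computation using Weil's bound for the cubic Gauss sums gives $\mathfrak{S}_1\le 3.096427$ and $W(z)\le 41.379367/\log N_i$. Multiplying out produces $100551.95119\,U_iV_i^{4}L^{-8}$, with seven of the eight logarithmic savings coming from the free prime variables $p_9,\dots,p_{15}$ via $J_i$ and the eighth from the sieve density $W(z)$.

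The concrete gap in your plan is that the two devices you propose to stack --- H\"older/Cauchy--Schwarz separation into moment integrals, followed by an explicit singular-series and singular-integral evaluation --- are incompatible. Once you split the integral by H\"older, you are left with Hua-type moment bounds whose implied constants are loose upper estimates rather than main-term constants, and the singular-series structure you then want to invoke cannot be reconstituted from that decomposition; you would not land anywhere near the explicit value $100551.95119$. Obtaining an explicit constant here requires treating the full eight-cube equation as a single object inside the sieve-plus-circle-method machinery, which is precisely what the Kawada--Wooley lemma encapsulates and what the subsequent Euler-product bookkeeping is designed to exploit. A smaller but real slip: with three free primes $p_9,p_{10},p_{11}\sim U_i$ and the sign pattern $-,+,-$, the cubic exponential sum over $[U_i,2U_i]$ should appear to the third (signed) power, so the integrand is $h(\alpha)\,|S_0(\alpha)|^2\,\overline{S_0(\alpha)}\,|T_0(\alpha)|^4$, not $h(\alpha)\,|S_0(\alpha)|^2\,|T_0(\alpha)|^4$ as you wrote.
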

\begin{proof}
We define
$$ \mathfrak{S}_1=\sum_{q=1}^\infty T_d(q),  $$
where
$$T_d(q)=\sum_{\substack{a=1\\(a,q)=1}}^q \frac{S(q,ad^3)C(p,a)^3\overline{{C(p,a)}^4}}{q \varphi(q)^{7}}, $$ and
$$S(q,a)=\sum_{m=1}^q e\bigg(\frac{am^3}{q}\bigg),\quad C(q,a)=\sum_{\substack{m=1\\(m,q)=1}}^q e\bigg(\frac{am^3}{q}\bigg). $$
According to \cite [Lemma 9.1] {7} and \cite[lemma 4.1]{ZHULI}, we obtain
$$
\sum_{\substack{m_i \sim U_i\\(m_i,P(z))=1} }R(m_i)\leq (1+\epsilon )e^{\gamma } J_i\mathfrak{S} _{1} W(z),
$$
where $\gamma =0.577215649\cdots $  is Euler’s constant, and $$W(z)=\prod_{11\leq p\leq N_i^{\frac{11(1-\epsilon )}{180}}} \bigg(1-\frac{w(p)}{p}\bigg),$$
$$\mathfrak{S}_1=(1+T_1(3)+T_1(9)) \prod_{p\neq3}  (1+T_1(p)), \quad w(p)=\frac{1+T_p(p)}{1+T_1(p)}.$$
Following \cite[Lemma 4]{ESP}, we get
\begin{equation}\label{216}
J_i\leq 440.62U_iV_i^{4}L^{-7} .
\end{equation}
Next we will perform more accurate calculations on $\mathfrak{S}_1 W(z)$.  By calculation,  we have
$$1-\frac{w(p)}{p}=\bigg(1-\frac{1}{p}\bigg)\bigg(1-\frac{T_p(p)-T_1(p)}{(p-1)(1+T_1(p))}\bigg).$$
Then by \cite[page 367]{ESP}
\begin{align}\label{110}
  \prod_ {11\leq p\leq N^{\frac{11(1-\epsilon )}{180}}} \bigg(1-\frac{w(p)}{p}\bigg) \leq \frac{40.197}{\log N} \prod_{p\geq11}\bigg(1-\frac{T_p(p)-T_1(p)}{(p-1)(1+T_1(p))}\bigg).
\end{align}
If $p\mid a$, then $S(p,a)=p$. If $p\nmid a$, then by Weil estimates we have $|S(p,a)|\leq 2\sqrt{p}$. Also by \cite[(4.6)]{RX}, we obtain
$$ \sum_{a=1}^{p-1}|C(p,a)^3\overline{{C(p,a)}^4} | \leq (2\sqrt{p}+1)^5(p-1)(2p+1). $$
Hence
\begin{align}\label{130}
|T_p(p)|\leq\frac{(2\sqrt{p}+1)^5(2p+1)}{(p-1)^6}   ,\quad  |T_1(p)|\leq \frac{2(2\sqrt{p}+1)^5(2p+1)}{\sqrt{p}(p-1)^6}.
\end{align}
Then for $p\geq 13$.
 \begin{align}\label{120}
   \frac{T_p(p)-T_1(p)}{(p-1)(1+T_1(p))}&\leq   \frac{\frac{(2\sqrt{p}+1)^5(2p+1)}{(p-1)^6}
   +\frac{2(2\sqrt{p}+1)^5(2p+1)}{\sqrt{p}(p-1)^6}}{ {(p-1)\bigg(1-\frac{2(2\sqrt{p}+1)^5(2p+1)}{\sqrt{p}(p-1)^6}\bigg)}}   \notag \\  &\leq1.2304\frac{(2\sqrt{p}+1)^5(2p+1)(2+\sqrt{p})}{(p-1)^7\sqrt{p}}.
 \end{align}
Therefore
\begin{align}\label{121}
    \prod_{p>4000}\bigg(1-\frac{T_p(p)-T_1(p)}{(p-1)(1+T_1(p))}\bigg)&\leq \prod_{p>4000}\bigg(1+1.2304\frac{(2\sqrt{p}+1)^5(2p+1)(2+\sqrt{p})}{(p-1)^7\sqrt{p}}\bigg)\notag \\
    &\leq \prod_{p>4000}\bigg(1+\frac{M_1}{p^{\frac{7}{2}}}\bigg)
    \leq \prod_{p>4000}\bigg(1+\frac{1}{p^{\frac{7}{2}}}\bigg)^{M_1}\notag \\
    &\leq \bigg(\frac{\zeta(\frac{7}{2})}{\zeta(7)} \prod_{p<4000}\bigg(1+\frac{1}{p^{\frac{7}{2}}}\bigg)^{-1}   \bigg )^{M_1}.
\end{align}
where $M_1=1.2304(1+\frac{2}{\sqrt{4000}})(2+\frac{1}{\sqrt{4000}})^5(2+\frac{1}{4000})(1-\frac{1}{4000})^{-7}=84.6567.$
Combining \eqref{120} and \eqref{121}, we can get
\begin{align*}
 &\prod_{p\geq11}\bigg(1-\frac{T_p(p)-T_1(p)}{(p-1)(1+T_1(p))}\bigg) \\
 &\leq  \prod_{11\leq p \leq 500}\bigg(1-\frac{T_p(p)-T_1(p)}{(p-1)(1+T_1(p))}\bigg) \bigg(\frac{\zeta(\frac{7}{2})}{\zeta(7)} \prod_{p<4000}\bigg(1+\frac{1}{p^{\frac{7}{2}}}\bigg)^{-1}    \bigg)^{M_1}\\
& \quad \times \prod_{500< p \leq 4000}\bigg(1+1.2304\frac{(2\sqrt{p}+1)^5(2p+1)(2+\sqrt{p})}{(p-1)^7\sqrt{p}}\bigg)\\
&\leq 1.0294133 \cdot (1+3.85\cdot 10^{-9}) \cdot (1+9.64\cdot 10^{-7})\\
&\leq 1.0294143.
\end{align*}
Plugging this value into \eqref{110} we obtain
\begin{align}\label{122}
   W(z)= \prod_{11 \leq p \leq N_i^ {\frac{11(1-\epsilon )}{180}}}\bigg(1-\frac{w(p)}{p}\bigg)\leq \frac{41.379367}{\log N_i}.
\end{align}
Similarly, we can estimate $\mathfrak{S}_1$.
\begin{align}\label{123333}
\prod_{ p \geq 4000}(1+T_1(p))&\leq
    \prod_{p\geq 4000}\bigg(1+\frac{2(2\sqrt{p}+1)^5(2p+1)}{\sqrt{p}(p-1)^6}\bigg) \notag\\
    &\leq \prod_{p \geq 4000}\bigg(1+\frac{M_2}{p^3}\bigg)\leq \prod_{p \geq 4000}\bigg(1+\frac{1}{p^3}\bigg)^{M_2}  \notag\\
    &\leq \bigg(\frac{\zeta(3)}{\zeta(6)}\prod_{p<4000}\bigg(1+\frac{1}{p^3}\bigg)^{-1}\bigg)^{M_2}.
\end{align}
where $M_2=2(2+\frac{1}{\sqrt{4000}})^5(2+\frac{1}{4000})(1-\frac{1}{4000})^{-6}=133.3569.$ Therefore by \eqref{130} and \eqref{121}, we get
\begin{align}\label{124}
    \mathfrak{S}_1&\leq (1+T_1(3)+T_1(9))\prod_{5\leq p \leq 500}(1+T_1(p))\cdot \bigg(\frac{\zeta(3)}{\zeta(6)}\prod_{p<4000}\bigg(1+\frac{1}{p^3}\bigg)^{-1}\bigg)^{M_2} \notag \\
   & \quad \times \prod_{500\leq p \leq 4000}\bigg(1+\frac{2(2\sqrt{p}+1)^5(2p+1)}{\sqrt{p}(p-1)^6} \bigg)\notag \\
   &\leq 3.0963\cdot1.00000047413  \cdot 1.00003994288 \leq 3.096427.
\end{align}
Collecting estimates \eqref{216}, \eqref{122} and \eqref{124}, we obtain
\begin{align} \label{bb}
 \sum_{\substack{m_i \sim U_i\\(m_i,P(z))=1}}R(m_i)
 &\leq e^{\gamma } 440.62U_iV_i^{4} L^{-7} \cdot128.1282L^{-1}\notag \\ &\leq 100551.95119U_iV_i^{4} L^{-8}.
 \notag
\end{align}
This proves the Lemma 2.12.
\end{proof}
\begin{lem}\label{lemma 2.13}
We have
  $$ \int_0^1 |S(\alpha_i)^4T(\alpha_i)^4|d\alpha_i \leq 7.390869U_iV_i^{4}. $$
\end{lem}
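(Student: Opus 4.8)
The plan is to realise $\int_0^1 |S(\alpha_i)^4 T(\alpha_i)^4|\,d\alpha_i$ as a weighted count of solutions of a cubic diagonal equation and then dominate that count by the sieved quantity already bounded in Lemma~\ref{lemma 2.12}. Since $|S^4 T^4| = S^2\overline{S}^2\,T^2\overline{T}^2$ as a product of the genuine exponential sums, orthogonality gives
\begin{equation}\nonumber
\int_0^1 |S(\alpha_i)^4 T(\alpha_i)^4|\,d\alpha_i = \sum (\log p_1)(\log p_2)(\log p_3)(\log p_4)(\log r_1)(\log r_2)(\log r_3)(\log r_4),
\end{equation}
the sum being over primes $p_1,p_2,p_3,p_4\sim U_i$ and $r_1,r_2,r_3,r_4\sim V_i$ with $p_1^3+p_2^3+r_1^3+r_2^3 = p_3^3+p_4^3+r_3^3+r_4^3$. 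First I would drop the weights trivially: each $\log p_j\le\log(2U_i)$ and each $\log r_j\le\log(2V_i)$, so the integral is at most $(\log 2U_i)^4(\log 2V_i)^4$ times the number $\mathcal{N}_i$ of such prime solutions.

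Next I would bound $\mathcal{N}_i\le\sum_{m_i\sim U_i,\ (m_i,P(z))=1}R(m_i)$. Setting $m_i=p_1$ and rewriting the equation as $m_i^3+p_2^3+r_1^3+r_2^3=p_3^3+p_4^3+r_3^3+r_4^3$, we recognise exactly the equation of Lemma~\ref{lemma 2.12} after matching $p_2,r_1,r_2$ with $p_{10},p_{12},p_{14}$ (the partners of $m_i$ on one side) and $p_3,p_4,r_3,r_4$ with $p_9,p_{11},p_{13},p_{15}$ (the variables on the other side); moreover $p\sim U_i$ and $r\sim V_i$ lie in $[U_i,2U_i]$ and $[V_i,2V_i]$, so every prime solution is counted by some $R(m_i)$. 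It only remains to verify that $m_i=p_1$ is admissible for the outer sum, i.e.\ $(p_1,P(z))=1$: since $p_1$ is prime with $p_1>U_i$, which by \eqref{2.1} has order $N_i^{1/3}$, while $z=N_i^{11(1-\epsilon)/180}$ and $\frac13>\frac{11}{180}$, we have $p_1>z$ for $N_i$ large, hence $(p_1,P(z))=1$. Therefore
\begin{equation}\nonumber
\int_0^1 |S(\alpha_i)^4 T(\alpha_i)^4|\,d\alpha_i \le (\log 2U_i)^4(\log 2V_i)^4\sum_{\substack{m_i\sim U_i\\(m_i,P(z))=1}}R(m_i).
\end{equation}

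Finally I would insert Lemma~\ref{lemma 2.12} and tidy the logarithmic factors. From \eqref{2.1} one has $\log(2U_i)=\frac13\log N_i-\frac13\log(2(1+\delta))<\frac13\log N_i$ and likewise $\log(2V_i)<\frac{5}{18}\log N_i$, while $\log N_i\le L$; hence $(\log 2U_i)^4(\log 2V_i)^4\le\frac{5^4}{3^4\cdot 18^4}L^8$. Since $\sum_{m_i\sim U_i,\ (m_i,P(z))=1}R(m_i)\le 100551.95119\,U_iV_i^4L^{-8}$ by Lemma~\ref{lemma 2.12}, the power $L^8$ cancels $L^{-8}$ and
\begin{equation}\nonumber
\int_0^1 |S(\alpha_i)^4 T(\alpha_i)^4|\,d\alpha_i \le \frac{5^4}{3^4\cdot 18^4}\cdot 100551.95119\,U_iV_i^4 = 7.390869\,U_iV_i^4,
\end{equation}
as claimed. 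The substance of the proof lies entirely in Lemma~\ref{lemma 2.12}; the present lemma is a routine orthogonality-and-sieve translation, so the only points needing care are the bookkeeping that matches the eight prime variables of the integral to the eight variables of $R(m_i)$ with the $U_i$- and $V_i$-roles correctly placed, and the elementary check $p_1>z$.
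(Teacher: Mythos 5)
Your proof is correct and follows essentially the same route as the paper: use orthogonality to express the integral as a weighted count of solutions of the diagonal cubic equation, drop the logarithmic weights to $\log^4(2U_i)\log^4(2V_i)$, dominate the solution count by $\sum_{m_i\sim U_i,\,(m_i,P(z))=1}R(m_i)$ via the variable-matching you describe, and then insert Lemma~\ref{lemma 2.12}. Your handling of the logarithmic factors is actually a bit tidier than the paper's (which only invokes the asymptotics $\tfrac{\log U_i}{\log N_i}=\tfrac13+O(\tfrac1{\log N_i})$): by expanding $\log(2U_i)=\tfrac13\log N_i-\tfrac13\log\bigl(2(1+\delta)\bigr)$ and the analogue for $V_i$ you obtain clean one-sided inequalities $\log(2U_i)<\tfrac13\log N_i$ and $\log(2V_i)<\tfrac5{18}\log N_i$ with no error term, and your explicit check that every prime $p_1\sim U_i$ exceeds $z=N_i^{11(1-\epsilon)/180}$ so that the sieve condition $(p_1,P(z))=1$ is automatic is a detail the paper leaves implicit.
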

\begin{proof}
The idea of this proof is similar to \cite[Lemma 4.2]{ZHULI}, with a slight improvement. Considering the number of solutions of the underlying equation, we get
\begin{align*}
  % 计算从0到1的积分
  \int_0^1
  % 积分函数
  |S(\alpha_i)^4T(\alpha_i)^4|
  % 积分变量
  d\alpha_i
  % 不等式
  \leq
  % 对数项
  \log^4(2U_i)\log^4(2V_i)
  % 求和符号和范围
  \sum_{\substack{m_i \sim U_i\\(m_i,P(z))=1}}
  % 求和函数
  R(m_i).
\end{align*}
Since $\frac{\log{}U_i}{\log{}N_i}=\frac{1}{3} +O(\frac{1}{\log{}N_i})$ and $
\frac{\log{}V_i}{\frac{5}{6} \log{}N_i}=\frac{1}{3} +O(\frac{1}{\log{}N_i}).$ By Lemma 2.12, we obtain
\bna
\int_0^1 |S(\alpha_i)^4T(\alpha_i)^4|d\alpha_i &\leq &
100551.95119\log^4(2U_i)\log^4(2V_i)L^{-8}U_iV_i^{4} \\
&\leq& 100551.95119\left(\frac{1}{3} \right ) ^{4}\left (\frac{5}{18}\right)^{4}U_iV_i^{4} \\
&\leq& 7.390869U_iV_i^{4}.
\ena
\end{proof}
\begin{lem} \label{lemma 2.14}
We have
\bna
\iint \limits_{(\alpha _{1},\alpha _{2} )\in [0,1]^{2} }&&
\left | S^{4}(\alpha _{1})T^{4}(\alpha _{1})S^{4}(\alpha _{2})T^{4}(\alpha _{2})  \right |
\mathrm{d}\alpha _{1}\mathrm{d}\alpha _{2}    \notag    \\
&\leq& 54.62495U_{1} U_{2} V_{1}^{4}  V_{2}^{4}.
\ena
\end{lem}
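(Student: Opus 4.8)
The key observation is that the integrand \emph{separates}: writing $F(\alpha_i)=\bigl|S^{4}(\alpha_i)T^{4}(\alpha_i)\bigr|$, the quantity under the double integral is exactly $F(\alpha_1)F(\alpha_2)$, a product of a function of $\alpha_1$ alone and a function of $\alpha_2$ alone. Hence, by Fubini's theorem,
$$
\iint\limits_{(\alpha_1,\alpha_2)\in[0,1]^2}
\bigl|S^{4}(\alpha_1)T^{4}(\alpha_1)S^{4}(\alpha_2)T^{4}(\alpha_2)\bigr|\,\mathrm{d}\alpha_1\,\mathrm{d}\alpha_2
=\left(\int_0^1 F(\alpha_1)\,\mathrm{d}\alpha_1\right)\left(\int_0^1 F(\alpha_2)\,\mathrm{d}\alpha_2\right).
$$
So the plan is simply to apply Lemma \ref{lemma 2.13} to each of the two one-dimensional factors: $\int_0^1|S(\alpha_i)^4T(\alpha_i)^4|\,\mathrm{d}\alpha_i\le 7.390869\,U_iV_i^{4}$ for $i=1,2$.

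Multiplying the two bounds gives
$$
\iint\limits_{[0,1]^2}\bigl|S^{4}(\alpha_1)T^{4}(\alpha_1)S^{4}(\alpha_2)T^{4}(\alpha_2)\bigr|\,\mathrm{d}\alpha_1\,\mathrm{d}\alpha_2
\le (7.390869)^2\,U_1U_2V_1^{4}V_2^{4},
$$
and the only numerical point to verify is $(7.390869)^2\le 54.62495$, which holds since $(7.390869)^2=54.624944\ldots$. This yields the claimed estimate. There is essentially no obstacle here: all the analytic work — the sieve bound on the number of solutions of the cubic equation (Lemma \ref{lemma 2.12}) and the resulting bound on $\int_0^1|S^4T^4|$ (Lemma \ref{lemma 2.13}) — has already been done, and Lemma \ref{lemma 2.14} is its immediate two-variable consequence via the product structure of the integrand.
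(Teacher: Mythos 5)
Your proposal is correct and is exactly what the paper does: the proof in the paper reads only ``By Lemma 2.13, we can lead to the above conclusion,'' which is precisely the separation-of-variables argument you spell out, and your numerical check $(7.390869)^2 \approx 54.624944 \le 54.62495$ is accurate.
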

\begin{proof}
By Lemma 2.13, we can lead to the above conclusion.
\end{proof}
\begin{lem}\label{lemma 2.15}
For every pair of large positive even integers $N_1$ and $N_2$ satisfying $N_2\gg N_1> N_2$,
$$
{R_3}(N_1,N_2) \leq 132.42956\lambda^{k-\frac{15}{2}} {{N_1}^\frac{10}{9}}{{N_2}^\frac{10}{9}}L^k.
$$
\end{lem}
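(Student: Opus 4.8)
The plan is to exploit the pointwise bound that defines $C(\mathcal{M})\setminus\mathcal{E}_\lambda$: on this set $|G(\alpha_1+\alpha_2)|<\lambda L$, so, since $k$ is a large integer (in particular $k>\tfrac{15}{2}$), I can peel off the factor $(\lambda L)^{k-\frac{15}{2}}$ from $G^k(\alpha_1+\alpha_2)$ and keep only $|G(\alpha_1+\alpha_2)|^{\frac{15}{2}}$ to integrate against. Recalling that in $R_3(N_1,N_2)$ the integrand is $\prod_{i=1}^2 f(\alpha_i)g(\alpha_i)S(\alpha_i)T(\alpha_i)$ times $G^k(\alpha_1+\alpha_2)$ and enlarging the range of integration to the whole unit square, this reduces the lemma to the estimate
$$
\iint\limits_{[0,1]^2}\Big|\prod_{i=1}^2 f(\alpha_i)g(\alpha_i)S(\alpha_i)T(\alpha_i)\Big|\,|G(\alpha_1+\alpha_2)|^{\frac{15}{2}}\,\mathrm{d}\alpha_1\mathrm{d}\alpha_2\le 132.42956\,N_1^{\frac{10}{9}}N_2^{\frac{10}{9}}L^{\frac{15}{2}},
$$
because multiplying through by $(\lambda L)^{k-\frac{15}{2}}$ reproduces exactly the asserted bound.

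To prove the displayed inequality I would write the modulus of the integrand as the product of the three non-negative functions $|f(\alpha_1)f(\alpha_2)|\,|G(\alpha_1+\alpha_2)|^{2}$, $|g(\alpha_1)g(\alpha_2)|\,|G(\alpha_1+\alpha_2)|^{\frac{11}{2}}$ and $|S(\alpha_1)T(\alpha_1)S(\alpha_2)T(\alpha_2)|$; their product does reproduce the integrand, the $G$-exponents adding up to $2+\tfrac{11}{2}=\tfrac{15}{2}$. Applying the generalised H\"older inequality with exponents $2,4,4$, which is admissible since $\tfrac12+\tfrac14+\tfrac14=1$, I obtain the product of $\big(\iint_{[0,1]^2} |f^2(\alpha_1)f^2(\alpha_2)G^4(\alpha_1+\alpha_2)|\big)^{1/2}$, $\big(\iint_{[0,1]^2} |g^4(\alpha_1)g^4(\alpha_2)G^{22}(\alpha_1+\alpha_2)|\big)^{1/4}$ and $\big(\iint_{[0,1]^2} |S^4(\alpha_1)T^4(\alpha_1)S^4(\alpha_2)T^4(\alpha_2)|\big)^{1/4}$. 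These are precisely the integrals estimated in Lemmas 2.10, 2.11 and 2.14, so those lemmas bound the expression by $\big(305.8869\,N_1N_2L^4\big)^{1/2}\big(181132.16\,N_1N_2L^{22}\big)^{1/4}\big(54.62495\,U_1U_2V_1^4V_2^4\big)^{1/4}$ up to a negligible factor $1+o(1)$ coming from the error term in Lemma 2.11.

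The remaining step is arithmetic. By \eqref{2.1} one has $V_i=U_i^{5/6}$ and $U_i=(N_i/(16(1+\delta)))^{1/3}$, so $U_i^{1/4}V_i=U_i^{13/12}=(N_i/(16(1+\delta)))^{13/36}$; hence the three $N_i$-exponents $\tfrac12,\tfrac14,\tfrac{13}{36}$ sum to $\tfrac{10}{9}$, the $L$-exponents $2,\tfrac{11}{2},0$ sum to $\tfrac{15}{2}$, and the constants multiply to $305.8869^{1/2}\cdot181132.16^{1/4}\cdot54.62495^{1/4}\cdot(16(1+\delta))^{-13/18}$, which with $\delta=10^{-4}$ I would check is $\le 132.42956$. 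This finishes the argument, there being no $o(1)$ left once $N_1,N_2$ are large.

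The work here is bookkeeping rather than any real obstacle. The one point that has to come out right is that the H\"older split retains exactly $\tfrac{15}{2}$ powers of $G$ while the exponents still sum to $1$: the exponent on the $f$-block is forced to be $2$ because Lemma 2.10 controls $f$ only paired with $G^4$, and the exponent on the $g$-block is forced to be $4$ because Lemma 2.11 controls $g$ only paired with $G^{22}$; it is then a fortunate feature of the shapes of those two lemmas that $\tfrac{4}{2}+\tfrac{22}{4}=\tfrac{15}{2}$ and that $\tfrac12+\tfrac14+\tfrac14=1$. The only other thing to watch is that the $\delta$-dependent factor $(16(1+\delta))^{-13/18}$ from the substitution for $U_i,V_i$ is carried with enough precision to land below $132.42956$; no analytic input beyond Lemmas 2.10, 2.11 and 2.14 is required.
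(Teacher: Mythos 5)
Your proposal matches the paper's proof of Lemma 2.15 essentially line for line: extract $(\lambda L)^{k-15/2}$ using the definition of $\mathcal{E}_\lambda$, enlarge the domain to $[0,1]^2$, apply H\"older with exponents $(2,4,4)$ to produce exactly the three integrals controlled by Lemmas 2.10, 2.11 and 2.14, and then collect constants using $U_i V_i^4 = (N_i/(16(1+\delta)))^{13/12}$. The paper states the H\"older step more tersely and leaves the final numerical verification implicit, but the argument is the same; your explicit check of the $N_i$- and $L$-exponents and of the constant $305.8869^{1/2}\cdot 181132.16^{1/4}\cdot 54.62495^{1/4}\cdot(16(1+\delta))^{-13/18}\le 132.42956$ is a welcome addition.
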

\begin{proof}
Combining Lemma 2.10, 2.11 and 2.14 and the definition of $U_{i} , V_i$ in \eqref{2.1}, we get
\bna
&&{R_3}(N_1,N_2) \\
&=& {\iint \limits_{C(\mathcal{M})\setminus{ {\mathcal{E}_\lambda}}}} \prod_{i=1}^2f(\alpha_i)g(\alpha_i)S{(\alpha_i)}T{(\alpha_i)}{G^k}(\alpha_1+\alpha_2)
{e({-\alpha_1}{N_1})} e({-\alpha_2}{N_2})d{\alpha_1}\,d{\alpha_2}\notag\\
&\leq& (\lambda L)^{k-\frac{15}{2}}\left ( \ {\iint \limits_{(\alpha _{1},\alpha _{2} )\in [0,1]^{2} }} |f^2(\alpha_1)f^2(\alpha_2){G^4}(\alpha_1+\alpha_2)
| d{\alpha_1}\,d{\alpha_2} \right ) ^{\frac{1}{2} }\\
&&\times \left (\  {\iint \limits_{(\alpha _{1},\alpha _{1} )\in [0,1]^{2} }} |g^4(\alpha_1)g^4(\alpha_2){G^{22}}(\alpha_1+\alpha_2)
| d{\alpha_1}\,d{\alpha_2} \right ) ^{\frac{1}{4} } \\
&&\times \left (\ {\iint \limits_{(\alpha _{1},\alpha _{2} )\in [0,1]^{2} }} |S^4(\alpha_1)T^4(\alpha_1)S^4(\alpha_2)T^4(\alpha_2)
| d{\alpha_1}\,d{\alpha_2} \right ) ^{\frac{1}{4} } \\
&\leq& (\lambda L)^{k-\frac{15}{2}}
\left ( 305.8869N_{1}N_{2}L^{4} \right ) ^{\frac{1}{2} }
\left ( 181132.16N_{1}N_{2}L^{22}+O(N_{1}N_{2}L^{22}) \right ) ^{\frac{1}{4} }\\
&&\times \left (54.62495U_{1} U_{2} V_{1}^{4}  V_{2}^{4} \right )^{\frac{1}{4} } \\
&\leq& 132.42956\lambda^{k-\frac{15}{2} } {{N_1}^\frac{10}{9}}{{N_2}^\frac{10}{9}}L^k.
\ena
\end{proof}
\section{Proof of Theorem 1.1}
Combining Lemma 2.5, 2.9 and 2.15, we get
\begin{equation}
\nonumber
\begin{split}
{R}(N_1,N_2)=&{R_1}(N_1,N_2)+{R_2}(N_1,N_2)+{R_3}(N_1,N_2)\\
>&(0.055033-132.42956{\lambda}^{k-\frac{15}{2}}){{N_1}^{\frac{10}{9} }{N_2}^{\frac{10}{9}}}L^{k}.
\end{split}
\end{equation}
When $k\geq 56$ and $\lambda =0.8512$, we solve the inequality
$${R}(N_1,N_2)>0.$$
Now, the proof of Theorem 1.1 has been completed.

\section*{Acknowledgements}

This work is supported by Natural Science Foundation of China (Grant Nos. 12361002) and Natural Science Foundation of Jiangxi Province (Grant Nos. 20224BAB201001). The authors would like to express their thanks to the referee for many useful suggestions and comments on the manuscript.

\end{document}